\newtheorem{theorem}{Theorem}
\newtheorem{lemma}{Lemma}
\newtheorem{example}{Example}
\newtheorem{remark}{Remark}
\DeclareMathOperator*{\argmin}{argmin} 
\DeclareMathOperator*{\aff}{aff}
\DeclareMathOperator*{\conv}{conv}
\DeclareMathOperator*{\epi}{epi}
\def\R{\mathbb{R}}
\def\square{{\setbox0=\hbox{X}\hbox to \ht0{\vrule\hss\vbox to \ht0{
  \hrule width \ht0\vfil\hrule width \ht0}\vrule}}}
\title{An oracle-based framework for robust combinatorial optimization}
\author{Enrico Bettiol\thanks{ TU Dortmund University \{enrico.bettiol@math.tu-dortmund.de\}},
Christoph Buchheim\thanks{ TU Dortmund University \{christoph.buchheim@math.tu-dortmund.de\}}, 
Marianna De Santis\thanks{ Sapienza University of Rome \{marianna.desantis@uniroma1.it\}}, 
Francesco Rinaldi\thanks{ University of Padova \{rinaldi@math.unipd.it\}}}
\begin{document}

\maketitle

\begin{abstract}
  We propose a general solution approach for min-max-robust
  counterparts of combinatorial optimization problems with uncertain
  linear objectives. We focus on the discrete scenario case, but our
  approach can be extended to other types of uncertainty sets such as
  polytopes or ellipsoids. Concerning the underlying certain problem,
  the algorithm is entirely oracle-based, i.e., our approach only
  requires a (primal) algorithm for solving the certain problem. It is
  thus particularly useful in case the underlying problem is hard to
  solve, or only defined implicitly by a given software addressing the
  certain case. The idea of our algorithm is to solve the convex
  relaxation of the robust problem by a simplicial decomposition
  approach, the main challenge being the non-differentiability of the
  objective function in the case of discrete or polytopal
  uncertainty. The resulting dual bounds are then used within a
  tailored branch-and-bound framework for solving the robust problem
  to optimality. By a computational evaluation, we show that our
  method outperforms straightforward linearization approaches on the robust
  minimum spanning tree problem. Moreover, using the Concorde solver
  for the certain oracle, our approach computes much better dual
  bounds for the robust traveling salesman problem in the same amount
  of time.
\end{abstract}

\par\noindent
{\bf Keywords.} robust optimization, global optimization, simplicial decomposition

\pagestyle{plain} \setcounter{page}{1}


\section{Introduction}

Robust optimization has become a wide and active research area in the
last decades. The aim is to address optimization problems with
uncertain data. Unlike the stochastic optimization problem, which
usually aims at optimizing expected values, the robust optimization
paradigm tries to optimize the worst case. While stochastic
optimization requires full knowledge of the probability distributions
of all uncertain problem data, robust optimization only asks for a
so-called uncertainty sets containing all scenarios that need to be
taken into account. While generally leading to computationally easier
problems than stochastic optimization, it is well-known that robust
counterparts of tractable combinatorial optimization problems usually
turn out to be NP-hard for most types of uncertainty sets; see,
e.g.,~\cite{kouvelis} or the recent survey~\cite{bksurvey} and the
references therein.

In this paper, we address robust counterparts of general combinatorial
optimization problems of the type
\begin{equation}\label{CertainProb}\tag{P}
      \begin{array}{l l}
        \min & c^\top x+c_0\\[1ex]
        \textnormal{\,s.t. }& x\in X,
      \end{array}
\end{equation}
where~$X\subseteq\{0,1\}^n$ is any set of binary vectors describing
the feasible solutions of the problem at hand. The objective function
coefficients~$(c_0,c)\in\R^{n+1}$ are considered uncertain. The robust
counterpart of \eqref{CertainProb} is then given by
\begin{equation}\label{MinMaxProb}\tag{R}
      \begin{array}{l l}
        \min & \max_{(c_0,c)\in U}\;c^\top x+c_0\\[1ex]
        \textnormal{\,s.t. }& x\in X,
      \end{array}
\end{equation}
where~$U\subseteq\R^{n+1}$ is the so-called \emph{uncertainty set},
collecting all likely scenarios. Note that allowing an uncertain
constant~$c_0$ makes the approach slightly more general, even though
the latter is not relevant in the deterministic
problem~\eqref{CertainProb}. With respect to the considered type of
uncertainty set, our approach is rather general, but we will
concentrate our exposition on the so-called \emph{discrete
  uncertainty} case, where~$U$ is given as a finite
set. Other classes of uncertainty sets often considered in the
literature include polytopal or ellipsoidal sets.

While many approaches devised in the literature consider special
classes of combinatorial structures~$X$, our aim is to devise an
entirely oracle-based approach. We thus assume that we have at our
disposition an algorithm that solves Problem~\eqref{CertainProb}, for
any given objective~$c$, but we do not pose any restrictions on how
this algorithm works. Our approach is thus particularly well-suited in
situations where the certain problem is already NP-hard but
well-studied, such as, e.g., the traveling salesman problem, or where
the underlying problem is not a classical textbook optimization
problem, but given by some sophisticated and probably obscure solution
software. Our approach does not require any knowledge about the
underlying problem.
  
As mentioned above, robust counterparts are often NP-hard even in
cases where the underlying problem~\eqref{CertainProb} is
tractable. Consequently, in order to solve~\eqref{MinMaxProb}, it
cannot suffice to call the oracle a polynomial number of times. This
is even true without assuming
P\;$\neq$\;NP~\cite{buchheim20}. Instead, we propose a
branch-and-bound approach, where the main ingredient is the
computation of the lower bound given by the straightfoward convex
relaxation of~\eqref{MinMaxProb}, namely
\begin{equation}\label{CR}\tag{C}
  \begin{array}{l l}
    \min & \max_{(c_0,c)\in U}\;c^\top x+c_0\\[1ex]
    \textnormal{ s.t. }& x\in \conv(X).
  \end{array}
\end{equation}
This problem is well-defined and convex, as long as~$U$ is any compact
set. While ellipsoidal uncertainty leads to a smooth objective
in~\eqref{CR}, which can be exploited
algorithmically~\cite{bdrt17,bd18}, the discrete and the polytopal
uncertainty cases lead to piecewise linear objective functions, requiring
different solution methods.

In our approach, Problem~\eqref{CR} is solved by an \emph{inner
  approximation algorithm}; see, e.g.,~\cite{Ber2015} and the
references therein. It belongs to the class of \emph{Simplicial
  Decomposition} (SD) methods.  First introduced by Holloway in
\cite{holloway1974extension} and then further studied in
\cite{hearn1987restricted,larsson1992simplicial,ventura1993restricted,von1977simplicial},
SD methods currently represent a standard tool in convex
optimization. Our SD method makes use of two different oracles: the
first one is an algorithm for solving the convex relaxation over an
inner approximation of $\conv(X)$, being the convex hull of a
subset~$X'$ of~$X$. It is important to notice that such a subroutine
implicitly defines the uncertainty set~$U$, while the rest of our
algorithm is independent of~$U$. The second oracle is the one
described above, which implicitly defines the set~$X$ and hence
also~$\conv(X)$.
Our approach can thus be seen as an
oracle-based version of a generalized SD algorithm; see, e.g.,
\cite{Ber2015, BertsekasY11} for further details about generalized
SD. The proposed method indeed performs a two-step optimization
process by handling an ever expanding inner approximation of the
relaxed feasible set $\conv(X)$.
At a given iteration, the method
first builds up a reduced problem (whose feasible set is given by the
inner approximation) and solves it by means of the first oracle. It
then feeds the second oracle with the information coming from the
first step to hopefully generate new extreme points that guarantee a
refinement of the inner approximation. If a new point cannot be found,
then the solution obtained with the last reduced problem is the
optimal one. The way the refinement step is carried out is crucial to
guarantee finite convergence of our method in the end.

\emph{Dropping rules} (i.e., rules that allow to get rid of useless
points in the inner approximation) are often used in simplicial
decomposition like algorithms to keep the computational cost deriving
from the first oracle small enough; see, e.g., \cite{Ber2015,
  bettiol2020conjugate,von1977simplicial}. As pointed out in
\cite{BertsekasY11}, defining suitable dropping rules for a
generalized simplicial decomposition, while guaranteeing finite
convergence of the method, is a challenging task. We propose a simple
dropping rule and analyze it in depth both from a theoretical and a
computational point of view.

Some other oracle-based algorithms for robust combinatorial
optimization with objective function uncertainty have been devised in
the literature. In particular, tailored column generation approaches
for dealing with the continuous relaxation of the given combinatorial
problem are studied in \cite{buchheim2017min, kammerling2020oracle}.
When considering Problem~\eqref{CR}, those column generation
algorithms turn out to be closely related to a Kelly's cutting plane
approach for the problem
$$\max_{(c_0,c)\in U}\;\min_{x\in \conv(X)}\;c^\top x+c_0\;,$$ which
is equivalent to~\eqref{CR} in case of convex~$U$ by the minimax
theorem. Another interesting approach to handle the
relaxation~\eqref{CR} is described in \cite{kurtz2021new}, where the
author proposes a projected subgradient method that approximately
solves the projection problem at each iteration by the classical
Frank-Wolfe algorithm. This approach is somehow related to
gradient-sliding methods, see, e.g., \cite{lan2020first} and the
references therein, and hence obviously differs from the one described
in this paper.

When aiming at general approaches that do no not exploit specific
characteristics of the underlying problem~\eqref{CertainProb}, the
main alternative to oracle-based algorithms are approaches based on an
IP-formulation of~\eqref{CertainProb}. For discrete uncertainty, the
non-linear objective in~\eqref{CR} can easily be linearized, and this
approach can be extended to infinite uncertainty sets~$U$ using a
dynamic generation of worst-case scenarios, provided that a linear
optimization oracle over~$U$ is given; see~\cite{mutapcic09} for a general analysis and~\cite{fischetti12} for an experimental 
comparison with reformulation-based approaches. 
The scenario generation method is still
applicable when having only a separation algorithm for~$\conv(X)$ at
hand. In the experimental evaluation presented in this paper, we
compare our SD approach to such a separation oracle based approach for
the discrete uncertainty case, using CPLEX to solve the resulting
integer linear problems.

In the subsequent section, we describe our SD approach in more detail,
concentrating on the discrete uncertainty case and with a particular
focus on dropping rules. In Section~\ref{sec:Embed_SD}, we explain how
we embedded this approach into a branch-and-bound framework. An
experimental evaluation is presented in
Section~\ref{section:results}. Section~\ref{section:conclusion}
concludes.

\section{Computation of lower bounds}  

The main ingredient in our approach is the computation of the lower
bound given by the convex relaxation of the robust
counterpart~\eqref{MinMaxProb}. Setting~$P:=\conv(X)$ and $f(x) :=
\max_{(c_0,c)\in U} c^\top x+c_0$, the problem we address is thus given as
\begin{equation}\label{ContRel0}\tag{CR}
  \begin{array}{l l}
    \min & f(x)\\[1ex]
    \textnormal{ s.t. }& x\in P.
  \end{array}
\end{equation}
It is easy to see that the objective function~$f$ in~\eqref{ContRel0}
is convex for any uncertainty set~$U$, however, it is not necessarily
differentiable. E.g., in case of a finite set~$U$, differentiability
is guaranteed only in points~$\bar x$ where the scenario~$(c_0,c)\in
U$ maximizing~$c^\top \bar x+c_0$ is unique. In the following, we
first describe the general idea of the simplicial decomposition
approach applied to the potentially non-differentiable
problem~\eqref{ContRel0}; see Section~\ref{sec:general}. Afterwards,
we investigate a variant of the approach where vertices are dropped
in case they are not needed to define the current simplex. This
however requires to deal with the issue of cycling; see
Section~\ref{sec:eliminate}.

\subsection{General approach}\label{sec:general}

We now describe the two oracles that we embed in our SD framework. The first oracle SIM-O essentially minimizes~$f$ over a simplex given
by the convex hull of a finite set~$V\subset\R^n$. Beyond the optimal solution~$x^*$, we
also need coefficients yielding~$x^*$ as a convex combination of
points in~$V$ and a subgradient~$c$ of~$f$ in~$x^*$ such that~$-c$
belongs to the normal cone of $\conv(V)$ in~$x^*$. The existence of
such~$c$ is a necessary and sufficient condition of optimality
for~$x^*$.
\begin{algorithm}\caption{SIM-O}\label{oracle2}
    \SetKwInOut{Input}{input}\SetKwInOut{Output}{output}
      \BlankLine
      \Input{finite subset $V\subset \R^n$}
      \BlankLine
      \Output{$ \alpha^* \in \R^{V}_+$ with $\sum_{v\in V}\alpha^*_v = 1$,\\ 
      $x^* = \sum_{v\in V} \alpha^*_v v$, and\\
      $c^* \in \partial f(x^*)\cap(-\mathcal{N}_{\conv(V)}(x^*))$}
      \BlankLine
\end{algorithm}

The second oracle, namely Oracle~LIN-O, is the main oracle defining the underlying
problem. It takes as input an objective vector~$c$ and returns a
minimizer of~$\min_{x\in X}c^\top x$, which is the same as solving
Problem~\eqref{CertainProb}.

\begin{algorithm}\caption{LIN-O}\label{oracle1}
    \SetKwInOut{Input}{input}\SetKwInOut{Output}{output}
      \BlankLine
      \Input{$c\in \R^n$}
      \BlankLine
      \Output{optimizer $x^*$ of $\min_{x\in X}\;c^\top x$}
      \BlankLine
\end{algorithm}

Using these oracles, Algorithm~\texttt{SD} works as follows (see the
pseudo-code below): the set~$V^k$ is initialized as the singleton
$\{\hat x^0\}$, where~$\hat x^0$ is an arbitrary element of~$X$.  Then, we enter
a loop.  At each iteration $k$, oracle SIM-O is first called, in order
to calculate a minimizer~$x^k$ of~$f$ over~$\conv(V^k)$ and a
subgradient
$$c^k\in \partial f(x^k)\cap(-\mathcal{N}_{\conv(V^k)}(x^k))\;.$$
Then, oracle LIN-O is called, giving as output a minimizer~$\hat x^k$
of~$(c^k)^\top x$ over~$x\in X$.  Note that both~$x^k$ and $\hat x^k$
belong to $P=\conv (X)$, but not necessarily to~$X$. From the
definition of~$\mathcal{N}_{\conv(V^k)}(x^k)$ we have that
 \[(c^k)^\top x \geq (c^k)^\top x^k\quad \forall\; x\in \conv(V^k).\]
This means that as long as $(c^k)^\top \hat x^k < (c^k)^\top x^k$ we
can go further in the minimization of $f$ over $P$ by including the
point $\hat x^k$ in the set $V^k$.  Otherwise, if ${c^k}^\top \hat x^k
\geq {c^k}^\top x^k$ we can stop our algorithm, as $x^k$ is a
minimizer of $f$ over $P$ and $f(x^k)$ is a lower bound for
Problem~\eqref{MinMaxProb}. See Fig.~\ref{fig:sd} for an illustration.
\begin{algorithm}\caption{\texttt{SD}}\label{fig:SD}
  \BlankLine
  \SetKwInOut{Given}{given}\SetKwInOut{Output}{output}
  \Given{oracles LIN-O and SIM-O}
  \BlankLine
  \Output{optimizer $x^*$ of~\eqref{ContRel0}}
  \BlankLine
  compute any $\hat x^0\in X$ by calling LIN-O with arbitrary objective\\
  set $V^1 = \{\hat x^0\}$\\
  \For{$k=1,2,\ldots$}{
    compute  $\alpha^k, x^k, c^k$ by calling SIM-O for the set $V^k$\\
    compute $\hat x^k$ by calling LIN-O with objective $c^k$\\ 
    \If{$(c^k)^\top\hat x^k \geq (c^k)^\top x^k$}{STOP: $x^k$ minimizes $f$ over $P$}
    set $V^{k+1} := V^k \cup \{\hat x^k\}$}
\end{algorithm}

\begin{figure}
  \begin{center}
    \begin{tikzpicture}[scale=2]
      \draw[->, thick, dashed, color=red] (0,0) -- (0.15,0.4) node[left] {$-c^1$};
      \node[scale=0.5, draw, circle, fill=white] at (0,0) {};
      \node[scale=0.5, draw, circle, fill=white] at (0,1) {};
      \node[scale=0.5, draw, circle, fill=white] at (1,0) {};
      \node[scale=0.5, draw, circle, fill=white] at (1,1) {};
      \node[scale=0.5, draw, circle, fill=black] at (0,0) {};
      \node[color=black] at (-0.15,0) {$\hat x^0$};
    \end{tikzpicture}
    \qquad\qquad
    \begin{tikzpicture}[scale=2]
      \draw[->, thick, dashed, color=red] (0.4,0.4) -- (0.18,0.62) node[left] {$-c^2$};
      \node[color=red,right] at (0.4,0.4) {$x^1$};
      \node[scale=0.5, draw, circle, fill=white] at (0,0) {};
      \node[scale=0.5, draw, circle, fill=white] at (0,1) {};
      \node[scale=0.5, draw, circle, fill=white] at (1,0) {};
      \node[scale=0.5, draw, circle, fill=white] at (1,1) {};
      \node[scale=0.5, draw, circle, fill=black] at (0,0) {};
      \node[color=black] at (-0.15,0) {$\hat x^0$};
      \node[scale=0.5, draw, circle, fill=black] at (1,1) {};
      \node[color=black, scale=0.5, draw, circle, fill=black] at (1,1) {};
      \node[color=black] at (1.15,1) {$\hat x^1$};
      \draw[thick] (0,0) -- (1,1);
      \node[scale=0.5, draw, circle, color=red, fill=red] at (0.4,0.4) {};
    \end{tikzpicture}
    \quad
    \begin{tikzpicture}[scale=2]
      \draw[thick, fill=gray!30] (0,0) -- (1,1) -- (0,1) -- cycle;
      \draw[->, thick, dashed, color=red] (0,0.8) -- (-0.4,0.8) node[left] {$-c^3$};
      \node[color=red,right] at (0,0.8) {$x^2$};
      \draw[->, thick, dashed, color=white] (1,0.8) -- (1.8,0.8);
      \node[scale=0.5, draw, circle, fill=white] at (0,0) {};
      \node[scale=0.5, draw, circle, fill=white] at (0,1) {};
      \node[scale=0.5, draw, circle, fill=white] at (1,0) {};
      \node[scale=0.5, draw, circle, fill=white] at (1,1) {};
      \node[scale=0.5, draw, circle, fill=black] at (0,0) {};
      \node[color=black] at (-0.15,0) {$\hat x^0$};
      \node[scale=0.5, draw, circle, fill=black] at (1,1) {};
      \node[color=black] at (1.15,1) {$\hat x^1$};
      \draw[thick] (0,0) -- (1,1);
      \node[color=black] at (-0.15,1) {$\hat x^2$};
      \node[scale=0.5, draw, circle, fill=black] at (0,1) {};
      \node[scale=0.5, draw, circle, color=red, fill=red] at (0,0.8) {};
    \end{tikzpicture}
  \end{center}
  \caption{Illustration of Algorithm~\texttt{SD} with~$X=\{0,1\}^2$.}\label{fig:sd}
\end{figure}
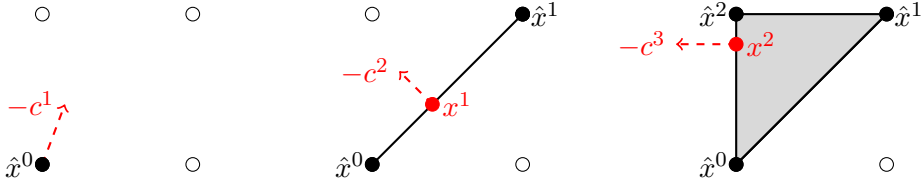

We claim that Algorithm~\texttt{SD} terminates after finitely many iterations
with a correct result. For showing this, first observe
\begin{lemma}\label{lemma_lb}
  At every iteration~$k$ of Algorithm~{\texttt{SD}}, a lower bound for
  Problem~\eqref{ContRel0} is given by~$f(x^k)+(c^k)^\top (\hat x^k-x^k)$.
\end{lemma}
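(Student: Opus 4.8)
The plan is to build the lower bound by combining convexity of $f$ with the optimality certificate produced by SIM-O. First I would recall the key inequality coming from the normal-cone membership of $c^k$: since $-c^k \in \mathcal{N}_{\conv(V^k)}(x^k)$, we have $(c^k)^\top x \ge (c^k)^\top x^k$ for all $x\in\conv(V^k)$, as already noted in the text. Separately, since $c^k\in\partial f(x^k)$, the subgradient inequality gives
\[
f(x) \ge f(x^k) + (c^k)^\top(x - x^k)\qquad\text{for all } x\in P.
\]
Thus $f$ is bounded below on $P$ by the affine function $\ell(x) := f(x^k) + (c^k)^\top(x-x^k)$, and so $\min_{x\in P} f(x) \ge \min_{x\in P}\ell(x)$.

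Next I would evaluate $\min_{x\in P}\ell(x)$. Because $\ell$ is affine (hence its minimum over the polytope $P=\conv(X)$ is attained at a vertex, i.e.\ over $X$), and because $\hat x^k$ is by construction a minimizer of $(c^k)^\top x$ over $x\in X$ (output of LIN-O), we get $\min_{x\in P}(c^k)^\top x = (c^k)^\top \hat x^k$. Therefore
\[
\min_{x\in P} f(x) \ \ge\ \min_{x\in P}\ell(x)\ =\ f(x^k) + (c^k)^\top(\hat x^k - x^k),
\]
which is exactly the claimed bound.

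The only subtle point — and the step I would be most careful about — is making sure $\hat x^k$ really minimizes $(c^k)^\top x$ over all of $P$ and not just over $X$; this is immediate since $P$ is the convex hull of $X$ and a linear function attains its minimum over a polytope at an extreme point, and all extreme points of $P$ lie in $X$. Everything else is a one-line application of the subgradient inequality and of the definition of the normal cone. I would state the argument in this order — subgradient inequality, then linear-programming duality/vertex argument for the affine minorant — and note that the normal-cone property of $c^k$ is what guarantees $x^k$ itself lies on the correct side, which is implicitly used when the algorithm's stopping test $(c^k)^\top\hat x^k \ge (c^k)^\top x^k$ certifies optimality of $x^k$.
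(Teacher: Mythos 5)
Your proof is correct and follows essentially the same route as the paper's: apply the subgradient inequality to obtain the affine minorant $f(x^k)+(c^k)^\top(x-x^k)$ and then observe that $\hat x^k$, the LIN-O output, minimizes $(c^k)^\top x$ over $P=\conv(X)$ because a linear function attains its minimum over the convex hull at a point of $X$. The only difference is cosmetic (you invoke the normal-cone property up front, which is not actually needed for this lemma, as you yourself note); no gap.
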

\begin{proof}
  Define~$c_0:=f(x^k)-(c^k)^\top x^k$. Since~$c^k\in \partial f(x^k)$,
  and by the choice of~$\hat x^k$, we obtain
  \[
  f(\bar x)
  \geq f(x^k) + (c^k)^\top (\bar x - x^k)
  = c_0 + (c^k)^\top \bar x
  \geq c_0 + \min_{x\in P}\;(c^k)^\top x
  = c_0 + (c^k)^\top \hat x^k
  \]
  for all $\bar x\in P$, so that $c_0 + (c^k)^\top \hat
  x^k=f(x^k)+(c^k)^\top (\hat x^k-x^k)$ is a lower bound for
  Problem~\eqref{ContRel0}.
\end{proof}
\begin{theorem}
  Algorithm~{\texttt{SD}} terminates after a finite number of iterations with a
  correct result.
 \end{theorem}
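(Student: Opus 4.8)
The plan is to establish the two halves of the statement separately: first that the point returned upon termination is a minimizer of~$f$ over~$P$, and then that termination occurs after finitely many iterations.

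For correctness, suppose the algorithm stops at iteration~$k$, i.e.\ $(c^k)^\top \hat x^k \geq (c^k)^\top x^k$. Note that $f$ is convex and finite-valued, hence continuous, and $P=\conv(X)$ is a compact polytope, so $\min_{x\in P} f(x)$ is attained. I would then sandwich this value: on the one hand $f(x^k)\geq \min_{x\in P} f(x)$ since $x^k\in\conv(V^k)\subseteq P$; on the other hand, Lemma~\ref{lemma_lb} together with the stopping condition gives $\min_{x\in P} f(x)\geq f(x^k)+(c^k)^\top(\hat x^k-x^k)\geq f(x^k)$. Hence equality holds throughout, so $f(x^k)=\min_{x\in P}f(x)$ and $x^k$ is a correct output.

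For finiteness, I would first note by induction that $V^k\subseteq X$ for all~$k$: the base case holds because $\hat x^0\in X$, and every point added afterwards is the output $\hat x^k$ of LIN-O, which lies in~$X$. The crucial step is to show that whenever the algorithm does \emph{not} stop at iteration~$k$, the inclusion $V^k\subsetneq V^{k+1}$ is strict, equivalently $\hat x^k\notin V^k$. This uses the defining property of the subgradient returned by SIM-O, namely $-c^k\in\mathcal{N}_{\conv(V^k)}(x^k)$, which is exactly the inequality $(c^k)^\top v\geq(c^k)^\top x^k$ for every $v\in V^k$; since failure of the stopping condition means $(c^k)^\top\hat x^k<(c^k)^\top x^k$, the point $\hat x^k$ cannot equal any $v\in V^k$. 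Consequently $|V^{k+1}|=|V^k|+1$ in every non-terminating iteration, while $|V^k|\leq|X|$ throughout, so the stopping condition must be met after at most $|X|$ iterations. Combined with the correctness argument, this proves the theorem.

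The only genuinely delicate point is the strict-increase step: it is precisely the reason why SIM-O is required to return a subgradient lying in $-\mathcal{N}_{\conv(V^k)}(x^k)$ rather than an arbitrary element of $\partial f(x^k)$. With an arbitrary subgradient the algorithm could in principle regenerate a vertex already contained in~$V^k$ and stall, and indeed, once vertex-dropping rules are permitted as in the following section, this is exactly where cycling can reappear and must be handled with care.
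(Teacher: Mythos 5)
Your proof is correct and follows essentially the same route as the paper's: correctness via Lemma~\ref{lemma_lb} combined with $x^k\in P$, and finiteness via the normal-cone property of $c^k$ forcing $\hat x^k\notin V^k$ in every non-terminating iteration, so that $V^k$ grows strictly within the finite set $X$. Your closing remark about why SIM-O must return a subgradient in $-\mathcal{N}_{\conv(V^k)}(x^k)$ matches the paper's own observation that the argument breaks down once vertex elimination is allowed.
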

\begin{proof}
  Correctness immediately follows from Lemma~\ref{lemma_lb},
  since~$f(x^k)$ is clearly an upper bound for
  Problem~\eqref{ContRel0} and the algorithm only terminates
  when~$(c^k)^\top \hat x^k=(c^k)^\top x^k$.  So it remains to show
  finiteness.  From the definition of $\mathcal{N}_{\conv(V^k)}(x^k)$
  we have that
  \[(c^k)^\top x \geq (c^k)^\top x^k\quad \forall\; x\in \conv(V^k).\]
  This means that in case Algorithm~{\texttt{SD}} does not terminate at
  iteration~$k$, the point $\hat x^k\in X$ does not belong to $V^k$, so
  that $V^{k+1}$ is a strict extension of $V^k$. The result then
  follows from the finiteness of~$X$.
\end{proof}
Note that this proof of convergence relies on our general assumption that~$X$ is
a finite set and on the fact that we never eliminate vertices
of~$V^k$. The situation is more complicated when such an elimination
is allowed, as discussed in Section~\ref{sec:eliminate} below.

In the remainder of this subsection, we concentrate on the important
special case that~$U$ consists of a finite number of scenarios $\{c_1,
c_2, \ldots, c_m\}\subseteq\R^{n+1}$, where we denote~$c_i=(\tilde c_{i},\bar c_i)$ with the uncertain constant being~$\tilde c_{i}$. In this case, the oracle SIM-O can be realized as follows:
first note that we essentially need to solve
the problem
\begin{equation}\label{prob:6.8}
 \min_{x\in \conv(V^k)} f(x) = \min_{x\in \conv(V^k)} \max\{\bar c_1^\top x+\tilde c_{1},\, \bar c_2^\top x+\tilde c_{2},\ldots, \bar c_m^\top x+\tilde c_{m}\}.   
\end{equation}
In the following, we denote by $x^k$ the minimizer of~\eqref{prob:6.8}, adopting the same notation used within Algorithm~\texttt{SD}.
As mentioned in~\cite{BertsekasY11}, having a finite number of scenarios is one of the special cases where the calculation of a subgradient 
$c^k \in \partial f(x^k)\cap(-\mathcal{N}_{\conv(V^k)}(x^k))$ can be obtained as a byproduct of the solution of~\eqref{prob:6.8}.
For sake of completeness, we report how the subgradient $c^k$ is derived.
Problem~\eqref{prob:6.8} can be rewritten as
\begin{equation}\label{eq:probdiscrete}
\begin{array}{l l}
        \min & z\\[1ex]
        \textnormal{ s.t. }& \bar c_j^\top x +\tilde c_j \leq z,\quad j=1,\ldots,m\\[1.2ex]
        & x\in \conv(V^k)\;.
\end{array}
\end{equation}
From the optimality conditions of~\eqref{eq:probdiscrete}, we have
that the optimal solution $(x^k,z^k)$, together with the dual optimal
variables $\lambda^k_j$, satisfies

\[
\begin{array}{l r}
z^k = f(x^k) =  \max\{\bar c_1^\top x^k+\tilde c_1,\, \bar c_2^\top x^k+\tilde c_2,\ldots, \bar c_m^\top x^k+\tilde c_m\} \\[2.5ex]
x^k\in \conv(V^k),\;\quad  \bar c_j^\top x^k +\tilde c_j\leq z^k \qquad j=1,\ldots,m & \mbox{(primal feasibility)}\\[2.5ex]
\displaystyle(x^k, z^k) \in \argmin_{x\in\conv(V^k),\; z\in \R} \left\{\left( 1 - \textstyle\sum_{j=1}^m \lambda^k_j\right)z +  
\textstyle\sum_{j=1}^m \lambda^k_j \bar c_j^\top x\right\} & \mbox{(Lagrangian optimality)}\\[4.0ex]
\lambda_j^k \geq 0, & \mbox{(dual feasibility)}\\[3.5ex]
\lambda_j^k = 0 \quad \mbox{ if } \;\; \bar c_j^\top x^k +\tilde c_j< z^k = f(x^k)\qquad j=1,\ldots,m &\mbox{(complementary slackness)}
\end{array}
\]

It follows that $\sum_{j=1}^m \lambda^k_j = 1$ and, from Lagrangian
optimality, we have
\begin{equation}\label{eq:subgrad}
\Big(\sum_{j=1}^m \lambda^k_j \bar c_j \Big)^\top (x - x^k)\geq 0,\qquad \forall x\in \conv(V^k). 
\end{equation}
It can be shown (see~\cite{Ber2009}, p.199) that the vector $c^k :=
\sum_{j=1}^m \lambda^k_j \bar c_j$ is a subgradient of $f$ at $x^k$,
and~\eqref{eq:subgrad} implies that $-c^k$ belongs to the normal cone
of $\conv(V^k)$ at $x^k$, so that we indeed have~$c^k \in
\partial f(x^k)\cap(-\mathcal{N}_{conv(V^k)}(x^k))$.
Summarizing, when the set $U$ is finite, an oracle SIM-O suited for
our purposes can be implemented by any linear programming solver able to
address Problem~\eqref{eq:probdiscrete}, rewritten considering the
$\alpha_v^k$ as variables. In this way, $x^k$ is obtained as the
convex combination of $\alpha_v^k$.

In case of a differentiable function~$f$, the choice of~$c^k$ is
unique. In case of finite~$U$, one may ask the question whether there
is some freedom in the choice of~$c^k$, which could potentially be
exploited in order to find particularly promising search
directions. However, it turns out that even in the discrete
uncertainty case, the subgradient~$c^k$ is unique with high
probability when the scenarios are chosen (or perturbed) randomly.
\begin{theorem}\label{th:c_unique}
  Assume that all scenarios in~$U=\{c_1,\dots,c_m\}$ are perturbed by
  any continuously distributed random vector in~$\R^{m(n+1)}$ with
  full-dimensional support. Then, with probability one, the
  set~$\partial f(x^k)\cap(-\mathcal{N}_{\conv(V^k)}(x^k))$ is a
  singleton in each iteration.
\end{theorem}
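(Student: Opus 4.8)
The plan is to translate the statement into the language of the linear program realising oracle SIM-O and then to exploit the finiteness of~$X$. In every iteration of Algorithm~\texttt{SD} the set~$V^k$ is a subset of~$X\subseteq\{0,1\}^n$, hence of a fixed finite set, so only finitely many sets~$V$ can occur along a run of the algorithm. Since a finite union of Lebesgue-null sets is again null and, by assumption, the perturbation assigns probability zero to every set of Lebesgue measure zero, it suffices to prove the following for each fixed finite~$V\subseteq X$: for almost every choice of the scenarios $(\tilde c_1,\bar c_1),\dots,(\tilde c_m,\bar c_m)$, the set $\partial f(x^*)\cap(-\mathcal{N}_{\conv(V)}(x^*))$ is a singleton, where~$x^*$ denotes any minimiser of~$f$ over~$\conv(V)$.

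Fix such a~$V$ and write $x=\sum_{v\in V}\alpha_v v$, so that the reduced problem becomes the linear program~\eqref{eq:probdiscrete} in the variables $(z,\alpha)\in\R\times\R^{V}$; let~$Q$ be its feasible polyhedron, which, because of the equality $\sum_v\alpha_v=1$, lies in an affine subspace of dimension~$|V|$. Using that $\partial f(x^*)=\conv\{\bar c_j : \bar c_j^\top x^*+\tilde c_j=f(x^*)\}$ together with complementary slackness for~\eqref{eq:probdiscrete}, one checks that for \emph{any} minimiser~$x^*$ the set $\partial f(x^*)\cap(-\mathcal{N}_{\conv(V)}(x^*))$ equals the set of all vectors $\sum_j\lambda_j\bar c_j$ where $(\lambda,\dots)$ ranges over the optimal solutions of the dual of~\eqref{eq:probdiscrete}; the inclusion ``$\supseteq$'' is exactly the computation already carried out in the paragraph preceding the theorem, while ``$\subseteq$'' follows by writing a given element of $\partial f(x^*)$ as a convex combination of the active~$\bar c_j$ and recovering the remaining dual multipliers from the normal-cone condition. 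Consequently it is enough to show that, for almost every choice of the scenarios, the dual of~\eqref{eq:probdiscrete} has a unique optimal solution.

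To this end I would prove the stronger statement that almost surely every vertex of~$Q$ is non-degenerate, i.e.\ no point of~$Q$ makes more than~$|V|$ of the $m+|V|$ inequality constraints (the~$m$ scenario inequalities and the~$|V|$ sign constraints $\alpha_v\ge 0$) active simultaneously. Granting this: the optimal set of~\eqref{eq:probdiscrete} is $\{z^*\}$ times the set of minimisers of the piecewise-linear convex map $\alpha\mapsto f(\sum_{v}\alpha_v v)$ over the unit simplex, hence compact, so it contains a vertex of~$Q$; that vertex is a non-degenerate optimal basic feasible solution, and at such a vertex the KKT multipliers are unique --- they are the coordinates of the objective gradient with respect to the basis formed by the active constraint gradients --- so, since every optimal dual solution provides such multipliers, the dual optimum is unique. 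To establish non-degeneracy, fix a subset~$S$ consisting of $|V|+1$ of the $m+|V|$ inequality constraints and consider, in the product of the $(z,\alpha)$-space with the scenario-data space, the zero set of the map sending $(z,\alpha,\mathrm{data})$ to the $|V|+2$ numbers $\sum_v\alpha_v-1$ and the constraint functions indexed by~$S$. If~$S$ contains all~$|V|$ sign constraints this zero set is empty; otherwise the $|V|+2$ gradients of these functions are linearly independent --- each scenario constraint in~$S$ provides a private pivot through its own constant~$\tilde c_j$, which occurs in none of the other functions, while the equality constraint and the sign constraints in~$S$ are already independent within the $(z,\alpha)$-block --- so the map is a submersion, its zero set is a smooth manifold of dimension $m(n+1)-1$, and its projection onto~$\R^{m(n+1)}$ has Lebesgue measure zero. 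Taking the union over the finitely many admissible~$S$, and over the finitely many~$V$, completes the argument.

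The step I expect to be the crux --- and the reason the naive ``fix~$x^k$ and perturb'' approach fails --- is that~$x^k$ is itself a function of the random scenarios. Passing to~\eqref{eq:probdiscrete} removes this difficulty because the combinatorial shell of that program, namely which inequalities are present and the fact that the objective is the single coordinate~$z$, does not depend on the perturbation, so one is back in the standard situation of a linear program with randomly perturbed constraint data; the only additional ingredient is that the finiteness of~$X$ permits a union bound over all sets~$V$ the algorithm could generate. Two features of the present setting call for a little care but cause no real trouble: the objective of~\eqref{eq:probdiscrete} is \emph{not} perturbed, but we never need genericity of the objective, only non-degeneracy of the vertices of~$Q$, and for this the moving constraint data already suffice; and although~$X\subseteq\{0,1\}^n$ is far from being in general position, this is irrelevant, since the whole argument rests on the \emph{independent} variability of the~$m$ scenario vectors and not on any genericity of~$V$.
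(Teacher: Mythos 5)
Your proof is correct, but it reaches the conclusion by a genuinely different route than the paper. Both arguments pivot on the same non-degeneracy fact --- that at an optimal basic solution of the LP realising SIM-O the number of active scenario constraints plus the number of vanishing coordinates $\alpha_v$ equals $|V^k|$ almost surely --- but they exploit it differently. The paper asserts this fact without proof ($|C^=|+|A^=|\ge|V^k|$ ``and equality holds with probability one'') and then concludes by a dimension count: $\dim\partial f(x^k)\le|C^=|-1$ and $\dim\mathcal{N}_{\conv(V^k)}(x^k)=n-(|V^k|-|A^=|-1)$ sum to at most~$n$, so the two sets meet in at most one point ``again with probability one'' --- a second genericity claim that is itself left unjustified. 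You instead identify $\partial f(x^k)\cap(-\mathcal{N}_{\conv(V^k)}(x^k))$ with the image of the dual optimal set under $\lambda\mapsto\sum_j\lambda_j\bar c_j$ and reduce everything to uniqueness of the dual optimum, which follows \emph{deterministically} from primal non-degeneracy; randomness enters only through your transversality argument (each scenario constraint having the private pivot $\tilde c_j$), which shows that almost surely no point of the feasible polyhedron activates more than $|V|$ inequalities. Your version buys two things the paper leaves implicit: an actual proof of the non-degeneracy claim, and a clean treatment of the fact that $V^k$ and $x^k$ are themselves functions of the random data --- handled by the union bound over the finitely many $V\subseteq X$ and by quantifying non-degeneracy over \emph{all} vertices of~$Q$ rather than only the optimal one. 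The paper's dimension count is shorter, but it conceals exactly these two measure-zero arguments, so your longer derivation is the more complete one.
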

\begin{proof}
  By definition, there exist $z^k$ and~$\alpha^k$ such that
  $(z^k,x^k,\alpha^k)$ is a basic optimal solution of
  \begin{equation}\label{eq:probdiscrete2}
    \begin{array}{l l}
      \min & z\\[1ex]
      \textnormal{ s.t. }& \bar c_j^\top x +\tilde c_j\leq z,\quad j=1,\ldots,m\\[1.2ex]
      & x=\sum_{v\in V^k} \alpha_v v\\
      & \alpha\ge 0\\
      & \sum_{v\in V^k} \alpha_v = 1\;.
    \end{array}
  \end{equation}
  Define~$A^=:=\{v\in V^k\mid \alpha^k_v=0\}$ and
  $C^=:=\{j\in\{1,\dots,m\}\mid \bar c_j^\top x^k+\tilde c_j=z^k\}$. As the feasible
  set of~\eqref{eq:probdiscrete2} has dimension~$|V^k|$, we have
  $|C^=|+|A^=|\ge |V^k|$, and equality holds with probability one.
  Now~$\partial f(x^k)=\conv\{\bar c_j\mid j\in C^=\}$ has dimension at
  most~$|C^=|-1$ and $\mathcal{N}_{\conv(V^k)}(x^k)$ has dimension
  $n-(|V^k|-|A^=|-1)$. Consequently, with probability one, the sum of
  the two dimensions is at most~$n$. Again with probability one, it
  follows that~$\partial f(x^k)$ and~$-\mathcal{N}_{\conv(V^k)}(x^k)$
  intersect in at most one point.
\end{proof}

\subsection{Vertex dropping rule}\label{sec:eliminate}

The running time of an iteration of Algorithm~\texttt{SD} strongly depends on
the size of~$V^k$. The overall performance could thus benefit from a
dropping rule for elements of~$V^k$. A straightforward idea is to
eliminate vertices not needed to define the minimizer of~$f$
over~$V^k$. We thus consider the following modified update rule:
\begin{equation}\label{eq:elrule}\tag{drop}
V^{k+1} := \{v\in V^k \mid \alpha^k_v >0\} \cup \{\hat x^k\}. 
\end{equation}
In the following, we will refer to Algorithm~\texttt{SD} where $V^k$ is updated
according to~\eqref{eq:elrule} as Algorithm~\texttt{SD-DROP}. In
case of a non-differentiable function~$f$, Algorithm~\texttt{SD-DROP} may
cycle, as shown in the following example.
\begin{example}\label{ex1}
 Let us consider the following problem  
\begin{equation*}
    \begin{array}{l l}
        \min & \max\{x_1 - x_2, x_2 - x_1\}\\[1ex]
        \textnormal{ s.t. } & x_1+ x_2 \leq1\\
        & x_1, x_2 \geq 0.
      \end{array}
\end{equation*}
Starting from $x^1={0\choose 0}$, Algorithm~{\texttt{SD-DROP}} will perform the
following iterations:
\begin{itemize}
 \item[k=1:] $x^1={0\choose 0}$, $V^1 = \{x^1\}$, $\alpha^1 = (1)$ and  $\partial f(x^1)\cap(-\mathcal{N}_{\conv(V^1)}(x^1))  
 =\conv\{ {1 \choose -1}, {-1 \choose 1}\}$.\\ We choose $c^1 = {1 \choose -1}$. Then $\hat x^1={0\choose 1}$, $V^2 = \{{0 \choose 0}, {0\choose 1}\}$
 \item[k=2:] $x^2={0\choose 0}$, $\alpha^2 = {1 \choose 0}$ and $\partial f(x^2)\cap(-\mathcal{N}_{\conv(V^2)}(x^2))  
 =\conv\{ {0 \choose 0}, {-1 \choose 1}\}$.\\ We choose $c^2 = {-1 \choose 1}$. Then $\hat x^2={1\choose 0}$, $V^3 = \{{0\choose 0}, {1\choose 0}\}$
\item[k=3:] $x^3={0\choose 0}$, $\alpha^3 = {1 \choose 0}$ and $\partial f(x^3)\cap(-\mathcal{N}_{\conv(V^3)}(x^3))
 =\conv\{ {0 \choose 0}, {1 \choose -1}\}$.\\ We choose $c^3 = {1 \choose -1}$. Then $\hat x^3={0\choose 1}$, $V^4 = \{{0\choose 0}, {0\choose 1}\}$.
 \end{itemize}
 At iteration $k=3$, we thus get $V^4 = V^2$ and the algorithm cycles. See Fig.~\ref{fig:ex1} for an illustration.
\end{example}
\begin{figure}
  \begin{center}
    \begin{tikzpicture}[scale=2]
      \draw[dotted] (0,2/3) -- (1/3,1);
      \draw[dotted] (0,1/3) -- (2/3,1);
      \draw[dotted,thick] (0,0) -- (1,1);
      \draw[dotted] (1/3,0) -- (1,2/3);
      \draw[dotted] (2/3,0) -- (1,1/3);
      \draw[->, thick, dashed, color=red] (0,0) -- (-0.3,0.3) node[left] {$-c^1$};
      \draw[->, thick, gray] (0.5-1/3,0.5+1/3) -- (0.5-1/3+1/10,0.5+1/3-1/10);
      \draw[->, thick, gray] (0.5-1/6,0.5+1/6) -- (0.5-1/6+1/10,0.5+1/6-1/10);
      \draw[->, thick, gray] (0.5+1/3,0.5-1/3) -- (0.5+1/3-1/10,0.5-1/3+1/10);
      \draw[->, thick, gray] (0.5+1/6,0.5-1/6) -- (0.5+1/6-1/10,0.5-1/6+1/10);
      \node[scale=0.5, draw, circle, fill=white] at (0,0) {};
      \node[scale=0.5, draw, circle, fill=white] at (0,1) {};
      \node[scale=0.5, draw, circle, fill=white] at (1,0) {};
      \node[scale=0.5, draw, circle, fill=black] at (0,0) {};
      \node[color=black] at (-0.15,0) {$\hat x^0$};
      \phantom{\draw[->, thick, dashed, color=red] (0,0) -- (0.3,-0.3) node[left] {$-c^2$};}
    \end{tikzpicture}
    \qquad
    \begin{tikzpicture}[scale=2]
      \draw[dotted] (0,2/3) -- (1/3,1);
      \draw[dotted] (0,1/3) -- (2/3,1);
      \draw[dotted,thick] (0,0) -- (1,1);
      \draw[dotted] (1/3,0) -- (1,2/3);
      \draw[dotted] (2/3,0) -- (1,1/3);
      \draw[->, thick, dashed, color=red] (0,0) -- (0.3,-0.3) node[left] {$-c^2$};
      \node[scale=0.5, draw, circle, fill=white] at (0,0) {};
      \node[scale=0.5, draw, circle, fill=black] at (0,1) {};
      \node[scale=0.5, draw, circle, fill=white] at (1,0) {};
      \node[scale=0.5, draw, circle, fill=black] at (0,0) {};
      \node[color=black] at (-0.15,0) {$\hat x^0$};
      \node[color=black] at (-0.15,1) {$\hat x^1$};
      \draw[thick] (0,0) -- (0,1);
    \end{tikzpicture}
    \qquad
    \begin{tikzpicture}[scale=2]
      \draw[dotted] (0,2/3) -- (1/3,1);
      \draw[dotted] (0,1/3) -- (2/3,1);
      \draw[dotted,thick] (0,0) -- (1,1);
      \draw[dotted] (1/3,0) -- (1,2/3);
      \draw[dotted] (2/3,0) -- (1,1/3);
      \draw[->, thick, dashed, color=red] (0,0) -- (-0.3,0.3) node[left] {$-c^3$};
      \node[scale=0.5, draw, circle, fill=white] at (0,0) {};
      \node[scale=0.5, draw, circle, fill=white] at (0,1) {};
      \node[scale=0.5, draw, circle, fill=black] at (1,0) {};
      \node[scale=0.5, draw, circle, fill=black] at (0,0) {};
      \node[color=black] at (-0.15,0) {$\hat x^0$};
      \node[color=black] at (1.15,0) {$\hat x^2$};
      \draw[thick] (0,0) -- (1,0);
      \phantom{\draw[->, thick, dashed, color=red] (0,0) -- (0.3,-0.3) node[left] {$-c^2$};}
    \end{tikzpicture}
    \qquad
    \begin{tikzpicture}[scale=2]
      \draw[dotted] (0,2/3) -- (1/3,1);
      \draw[dotted] (0,1/3) -- (2/3,1);
      \draw[dotted,thick] (0,0) -- (1,1);
      \draw[dotted] (1/3,0) -- (1,2/3);
      \draw[dotted] (2/3,0) -- (1,1/3);
      \draw[->, thick, dashed, color=red] (0,0) -- (0.3,-0.3) node[left] {$-c^4$};
      \node[scale=0.5, draw, circle, fill=white] at (0,0) {};
      \node[scale=0.5, draw, circle, fill=black] at (0,1) {};
      \node[scale=0.5, draw, circle, fill=white] at (1,0) {};
      \node[scale=0.5, draw, circle, fill=black] at (0,0) {};
      \node[color=black] at (-0.15,0) {$\hat x^0$};
      \node[color=black] at (-0.15,1) {$\hat x^3$};
      \draw[thick] (0,0) -- (0,1);
    \end{tikzpicture}
  \end{center}
  \caption{Illustration of Example~\ref{ex1}.}\label{fig:ex1}
\end{figure}
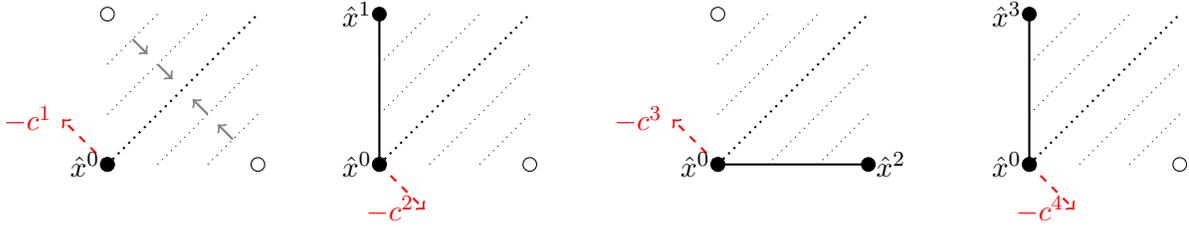

Considering this example, two questions may arise. Firstly, the
solution~$x^1$ is actually optimal, so that choosing a better
subgradient (namely zero) would have stopped the algorithm
immediately. Secondly, the scenarios $(0,1,-1)^\top$ and
$(0,-1,1)^\top$ defining the
uncertainty set~$U$ contain negative entries. The following example
shows that neither of the two features causes cycling:
\begin{example}\label{ex2}
 Let us consider the following problem  
\begin{equation*}
    \begin{array}{l l}
        \min & \max\{x_1, x_2\}\\[1ex]
        \textnormal{ s.t. } & x_1+ x_2 \geq1\\
        & x_1, x_2 \leq 1.
      \end{array}
\end{equation*}
Starting from $x^1={1\choose 1}$, Algorithm~{\texttt{SD-DROP}} will perform the following
iterations:
\begin{itemize}
 \item[k=1:] $x^1={1\choose 1}$, $V^1 = \{x^1\}$, $\alpha^1 = (1)$ and  $\partial f(x^1)\cap(-\mathcal{N}_{\conv(V^1)}(x^1))
 =\conv\{ {1 \choose 0}, {0 \choose 1}\}$.\\ We choose $c^1 = {1 \choose 0}$. Then $\hat x^1 ={0\choose 1}$, $V^2 = \{{1 \choose 1}, {0\choose 1}\}$
 \item[k=2:] $x^2$ may be ${1\choose 1}$ again, with $\alpha^2 = {1 \choose 0}$, so we eliminate ${0\choose 1}$, hence $V^2 = \{{1 \choose 1}\}=V^1$.
 \end{itemize}
 See Fig.~\ref{fig:ex2} for an illustration.
\end{example}
\begin{figure}
  \begin{center}
    \begin{tikzpicture}[scale=2]
      \draw[dotted,thick] (0,0) -- (1,1);
      \draw[dotted] (0,1) -- (1,1);
      \draw[dotted] (0,2/3) -- (2/3,2/3);
      \draw[dotted] (0,1/3) -- (1/3,1/3);
      \draw[dotted] (1,0) -- (1,1);
      \draw[dotted] (2/3,0) -- (2/3,2/3);
      \draw[dotted] (1/3,0) -- (1/3,1/3);
      \draw[->, thick, dashed, color=red] (1,1) -- (1-0.4,1) node[above] {$-c^1$};
      \draw[->, thick, gray] (0.15,1/3) -- (0.15,1/3-1/10);
      \draw[->, thick, gray] (0.15,2/3) -- (0.15,2/3-1/10);
      \draw[->, thick, gray] (0.15,1) -- (0.15,1-1/10);
      \draw[->, thick, gray] (1/3,0.15) -- (1/3-1/10,0.15);
      \draw[->, thick, gray] (2/3,0.15) -- (2/3-1/10,0.15);
      \draw[->, thick, gray] (1,0.15) -- (1-1/10,0.15);
      \node[scale=0.5, draw, circle, fill=black] at (1,1) {};
      \node[scale=0.5, draw, circle, fill=white] at (0,1) {};
      \node[scale=0.5, draw, circle, fill=white] at (1,0) {};
      \node[color=black] at (1+0.15,1) {$\hat x^0$};
    \end{tikzpicture}
    \qquad
    \begin{tikzpicture}[scale=2]
      \draw[dotted,thick] (0,0) -- (1,1);
      \draw[dotted] (0,1) -- (1,1);
      \draw[dotted] (0,2/3) -- (2/3,2/3);
      \draw[dotted] (0,1/3) -- (1/3,1/3);
      \draw[dotted] (1,0) -- (1,1);
      \draw[dotted] (2/3,0) -- (2/3,2/3);
      \draw[dotted] (1/3,0) -- (1/3,1/3);
      \draw[->, thick, gray] (0.15,1/3) -- (0.15,1/3-1/10);
      \draw[->, thick, gray] (0.15,2/3) -- (0.15,2/3-1/10);
      \draw[->, thick, gray] (0.15,1) -- (0.15,1-1/10);
      \draw[->, thick, gray] (1/3,0.15) -- (1/3-1/10,0.15);
      \draw[->, thick, gray] (2/3,0.15) -- (2/3-1/10,0.15);
      \draw[->, thick, gray] (1,0.15) -- (1-1/10,0.15);
      \node[scale=0.5, draw, circle, fill=black] at (1,1) {};
      \node[scale=0.5, draw, circle, fill=black] at (0,1) {};
      \node[scale=0.5, draw, circle, fill=white] at (1,0) {};
      \node[color=black] at (1+0.15,1) {$\hat x^0$};
      \node[color=black] at (-0.15,1) {$\hat x^1$};
      \draw[thick] (0,1) -- (1,1);
    \end{tikzpicture}
    \qquad
    \begin{tikzpicture}[scale=2]
      \draw[dotted,thick] (0,0) -- (1,1);
      \draw[dotted] (0,1) -- (1,1);
      \draw[dotted] (0,2/3) -- (2/3,2/3);
      \draw[dotted] (0,1/3) -- (1/3,1/3);
      \draw[dotted] (1,0) -- (1,1);
      \draw[dotted] (2/3,0) -- (2/3,2/3);
      \draw[dotted] (1/3,0) -- (1/3,1/3);
      \draw[->, thick, dashed, color=red] (1,1) -- (1-0.4,1) node[above] {$-c^2$};
      \draw[->, thick, gray] (0.15,1/3) -- (0.15,1/3-1/10);
      \draw[->, thick, gray] (0.15,2/3) -- (0.15,2/3-1/10);
      \draw[->, thick, gray] (0.15,1) -- (0.15,1-1/10);
      \draw[->, thick, gray] (1/3,0.15) -- (1/3-1/10,0.15);
      \draw[->, thick, gray] (2/3,0.15) -- (2/3-1/10,0.15);
      \draw[->, thick, gray] (1,0.15) -- (1-1/10,0.15);
      \node[scale=0.5, draw, circle, fill=black] at (1,1) {};
      \node[scale=0.5, draw, circle, fill=white] at (0,1) {};
      \node[scale=0.5, draw, circle, fill=white] at (1,0) {};
      \node[color=black] at (1+0.15,1) {$x^2$};
    \end{tikzpicture}
  \end{center}
  \caption{Illustration of Example~\ref{ex2}.}\label{fig:ex2}
\end{figure}
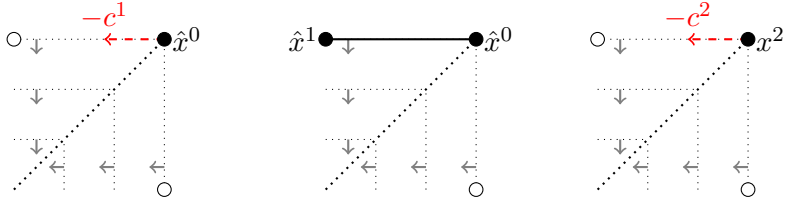

It is easy to see that cycling cannot occur when~$f$ is
differentiable. In fact, in this case, if~$x^k$ is not optimal for
Problem~\eqref{ContRel0}, we have $f(x^{k+1})< f(x^k)$, since~$-c^k$ is
a descent direction. In particular, Algorithm~{\texttt{SD-DROP}} terminates
after a finite number of iterations.

For the case of finite~$U$, differentiability is not given, and
cycling can occur as seen in the examples above. However, we can still
show a weaker result: we will prove that a small random perturbation
of the scenario entries can avoid cycling (with probability one). For this, we first need the following observation.
\begin{lemma}\label{lemma_descent}
  Let $L$ be an affine subspace of~$\R^n$ and assume that $f|_L$, the
  restriction of~$f$ to~$L$, is differentiable in~$x\in L$. Consider~$y\in
  L$ and~$c\in\partial f(x)$ with~$c^\top(y-x)<0$. Then~$y-x$ is a
  descent direction of~$f$ in~$x$.
\end{lemma}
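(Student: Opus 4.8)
The plan is to prove the slightly stronger statement that the one-sided directional derivative $f'(x;d):=\lim_{t\downarrow 0}\frac{f(x+td)-f(x)}{t}$, with $d:=y-x$, is strictly negative; this gives $f(x+td)<f(x)$ for all sufficiently small $t>0$, hence that $d$ is a descent direction of~$f$ at~$x$. I would rely on two standard facts about the finite-valued convex function~$f$: first, $f'(x;\cdot)$ exists and is finite in every direction; second, $f'(x;v)=\max_{g\in\partial f(x)}g^\top v$, so in particular $g^\top v\le f'(x;v)$ for every $g\in\partial f(x)$ and every $v\in\R^n$.

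First I would pass to the linear subspace $L_0:=L-x$ parallel to~$L$, noting that $d\in L_0$. Since $L$ is an affine subspace, $x+td\in L$ for every $t\in\R$, so the map $t\mapsto f(x+td)$ agrees with $t\mapsto (f|_L)(x+td)$ on all of~$\R$, and likewise for $-d$; consequently $f'(x;d)=(f|_L)'(x;d)$ and $f'(x;-d)=(f|_L)'(x;-d)$. Because $f|_L$ is differentiable at~$x$, its directional derivative at~$x$ is a linear functional on~$L_0$, hence antisymmetric, so $(f|_L)'(x;-d)=-(f|_L)'(x;d)$ and therefore $f'(x;-d)=-f'(x;d)$.

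Next I would apply the subgradient/directional-derivative inequality to the direction $-d$: since $c\in\partial f(x)$,
\[
  -\,c^\top d \;=\; c^\top(-d)\;\le\; f'(x;-d)\;=\;-\,f'(x;d),
\]
which rearranges to $f'(x;d)\le c^\top d$. By hypothesis $c^\top d = c^\top(y-x)<0$, so $f'(x;d)<0$, and thus $y-x$ is a descent direction of~$f$ at~$x$, as claimed.

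The only point that requires a little care is the identity $f'(x;\pm d)=(f|_L)'(x;\pm d)$ combined with the use of differentiability of $f|_L$ to obtain antisymmetry of the directional derivative on~$L_0$ — essentially the observation that restricting a convex function to an affine subspace and restricting its directional-derivative map to the parallel linear subspace commute. Everything else is the textbook convex-analysis dictionary between subgradients and directional derivatives already used elsewhere in the paper.
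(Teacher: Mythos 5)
Your proof is correct and follows essentially the same route as the paper's: both exploit differentiability of $f|_L$ at $x$ to get the antisymmetry $f'(x;d)=-f'(x;-d)$ for $d=y-x\in L-x$, then apply the subgradient inequality $c^\top(-d)\le f'(x;-d)$ to conclude $f'(x;d)\le c^\top d<0$. You merely spell out more carefully the identification of the directional derivatives of $f$ and $f|_L$ along $\pm d$, which the paper leaves implicit.
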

\begin{proof}
  Let~$d:=y-x$. Then $\frac{\partial f}{\partial d}(x)=-\frac{\partial
    f}{\partial(-d)}(x)$, since~$f|_L$ is differentiable
  in~$x$. From~$c\in\partial f(x)$ we obtain~$\frac{\partial
    f}{\partial(-d)}(x)\ge c^\top (-d)$. Hence~$\frac{\partial
    f}{\partial d}(x)\le c^\top d<0$.
\end{proof}
\begin{lemma}\label{theorem_improve}
  Consider an iteration~$k$ in which Algorithm~{\texttt{SD-DROP}} does
  not terminate. Let~$L$ be an affine subspace of~$\R^n$
  containing~$x^k$ such that
  \begin{itemize}
  \item[(i)] $f|_L$ is differentiable in~$x^k$,\\[-3.75ex]
  \item[(ii)] $\dim (L\cap \aff(V^{k+1}))\ge 1$,\\[-3.75ex] 
  \item[(iii)] and $c^k$ is not orthogonal to~$L\cap \aff (V^{k+1})$.
  \end{itemize}
  Then~$f(x^{k+1})<f(x^k)$.
\end{lemma}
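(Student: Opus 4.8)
The plan is to use that $x^{k+1}$ is a global minimizer of $f$ over $\conv(V^{k+1})$, so it suffices to produce a single point $y\in\conv(V^{k+1})$ with $f(y)<f(x^k)$. The natural candidate is a short step $y=x^k+td$ with $t>0$ small, along a suitable direction $d$ parallel to~$L$. To turn a direction with $(c^k)^\top d<0$ into a genuine \emph{descent} direction of~$f$ (rather than merely a subgradient direction), I would invoke Lemma~\ref{lemma_descent}, which is precisely why hypothesis~(i) is needed; to keep the step inside $\conv(V^{k+1})$, I also need $d$ to be parallel to $\aff(V^{k+1})$, which is where (ii) and (iii) enter: (ii) ensures $L\cap\aff(V^{k+1})$ contains a line, and (iii) ensures $c^k$ is not orthogonal to that line.

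The first fact I would establish is an orthogonality relation coming from the normal-cone membership of~$c^k$. Writing $W:=\{v\in V^k\mid\alpha^k_v>0\}$, the condition $c^k\in-\mathcal{N}_{\conv(V^k)}(x^k)$ gives $(c^k)^\top(v-x^k)\ge 0$ for every $v\in V^k$; summing these with the weights $\alpha^k_v$ and using $x^k=\sum_{v\in W}\alpha^k_v v$ forces $(c^k)^\top(v-x^k)=0$ for all $v\in W$, hence $c^k$ is orthogonal to $L_W:=\lin\{v-x^k\mid v\in W\}$. Since the direction space of $\aff(V^{k+1})$ equals $L_W+\R(\hat x^k-x^k)$, any $d$ parallel to $\aff(V^{k+1})$ decomposes as $d=d_W+s(\hat x^k-x^k)$ with $d_W\in L_W$, and then $(c^k)^\top d=s\,(c^k)^\top(\hat x^k-x^k)$. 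Because the algorithm does not terminate at iteration~$k$, we have $(c^k)^\top(\hat x^k-x^k)<0$, so the sign of $(c^k)^\top d$ is governed entirely by~$s$. Taking $d$ parallel to $L\cap\aff(V^{k+1})$ with $(c^k)^\top d\neq 0$ (which exists by~(iii)) therefore forces $s\neq 0$, and after possibly replacing $d$ by $-d$ I may assume $s>0$ and $(c^k)^\top d<0$.

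Next I would check that $x^k+td\in\conv(V^{k+1})$ for all sufficiently small $t>0$. Substituting $x^k=\sum_{v\in W}\alpha^k_v v$ and $d_W=\sum_{v\in W}\beta_v v$ with $\sum_{v\in W}\beta_v=0$, one obtains
\[
x^k+td=\sum_{v\in W}\bigl(\alpha^k_v(1-ts)+t\beta_v\bigr)v+(ts)\,\hat x^k,
\]
whose coefficients sum to~$1$, whose coefficient of $\hat x^k$ is $ts>0$, and whose coefficient of each $v\in W$ tends to $\alpha^k_v>0$ as $t\to 0^+$; hence all coefficients are nonnegative for $t$ small enough. Moreover $x^k+td\in L$ since $x^k\in L$ and $d$ is parallel to~$L$. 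Finally, by~(i) and $(c^k)^\top d<0$, Lemma~\ref{lemma_descent} (applied with $y=x^k+d\in L$) shows that $d$ is a descent direction of~$f$ at~$x^k$, so $f(x^k+td)<f(x^k)$ for all small $t>0$. Choosing $t$ small enough to meet both requirements gives $x^k+td\in\conv(V^{k+1})$ and $f(x^{k+1})\le f(x^k+td)<f(x^k)$.

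The main obstacle — and the only genuinely non-routine step — is recognizing and proving the orthogonality $c^k\perp L_W$ from the normal-cone condition together with strict positivity of the convex combination, and then combining it with the non-termination inequality $(c^k)^\top(\hat x^k-x^k)<0$ to conclude that \emph{every} direction in $L\cap\aff(V^{k+1})$ on which $c^k$ is nonzero automatically has a strictly positive $(\hat x^k-x^k)$-component, i.e.\ points ``into'' the new simplex rather than away from it. Without this, a downhill direction on~$L$ could immediately leave $\conv(V^{k+1})$ and the argument would collapse (as Example~\ref{ex1} shows is possible without the differentiability hypothesis). Everything else — the convex-combination bookkeeping and the appeal to Lemma~\ref{lemma_descent} — is straightforward.
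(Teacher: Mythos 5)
Your proof is correct and follows essentially the same route as the paper's: both rest on the orthogonality $(c^k)^\top(v-x^k)=0$ for all $v$ with $\alpha^k_v>0$ (from the normal-cone condition and the strict convex combination), the resulting sign argument forcing the $(\hat x^k-x^k)$-component of the chosen direction to be strictly positive, and Lemma~\ref{lemma_descent} to upgrade $(c^k)^\top d<0$ into a genuine descent direction under hypothesis~(i). The only difference is presentational: the paper argues the admissibility of a small step via the relative interior of $\conv(\bar V^k)$ and an explicit $\bar\varepsilon\le 1/\delta$, where you carry out the equivalent coefficient bookkeeping directly.
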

\begin{proof}
  By~(ii),
  there exists some $y\in
  L\cap \aff(V^{k+1})$ with~$y\neq x^k$. Since~$L$
  and~$\aff(V^{k+1})$ are affine spaces both containing~$x^k$, we may choose~$y$ such
  that~$(c^k)^\top (y-x^k)\le 0$.  By~(iii), we may even assume
  that~$(c^k)^\top (y-x^k)< 0$. Using Lemma~\ref{lemma_descent}
  and~(i), we thus derive that~$y-x^k$ is a descent direction of~$f$
  in~$x^k$. It thus remains to show that~$x^k+\varepsilon(y-x^k)\in
  \conv(V^{k+1})$ for some~$\varepsilon>0$, which implies that
  some~$\bar x\in\conv(V^{k+1})$ has a strictly smaller objective
  value than~$x^k$ and hence~$f(x^{k+1})\le f(\bar x)<f(x^k)$.

  Since~$y\in \aff(V^{k+1})$, we can write~$y=x^k+\sum_{v\in \bar
    V^k}\gamma_v(v-x^k)+\delta (\hat x^k-x^k)$, where we define~$\bar
  V^k:=\{v\in V^k\mid \alpha^k_v> 0\}$. As~$x^k$ belongs to the
  relative interior of~$\conv(\bar V^{k})$, there
  exists~$\bar\varepsilon>0$ such that $x^k+\sum_{v\in \bar
    V^k}\bar\varepsilon\gamma_v(v-x^k)\in \conv(\bar V^k)$.
  From~$(c^k)^\top (y-x^k)< 0$, $(c^k)^\top \hat x^k<(c^k)^\top x^k$,
  and~$(c^k)^\top (v-x^k)= 0$ for all~$v\in \bar V^k$ we
  derive~$\delta> 0$.  By choosing~$\bar\varepsilon\le\tfrac 1\delta$,
  we may assume that~$x^k+\bar\varepsilon\delta(\hat x^k-x^k)\in
  \conv(V^{k+1})$. Altogether, we derive that~$x^k+\tfrac
  12\bar\varepsilon(y-x^k)\in\conv(V^{k+1})$.
\end{proof}
\begin{theorem}\label{theorem_perturb}
  Assume that all scenarios in~$U=\{c_1,\dots,c_m\}$ are perturbed by
  any continuously distributed random vector in~$\R^{m(n+1)}$ with
  full-dimensional support. Then, with probability one,
  Algorithm~{\texttt{SD-DROP}} terminates after finitely many iterations.
\end{theorem}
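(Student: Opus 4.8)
The plan is to combine Lemma~\ref{theorem_improve} with the elementary observation that the sequence $f(x^k)$ takes only finitely many values. Concretely, I would show that, with probability one, $f(x^{k+1})<f(x^k)$ holds at every iteration $k$ at which Algorithm~\texttt{SD-DROP} does \emph{not} terminate. Since $V^k\subseteq X$ for all $k$, we have $f(x^k)=\min_{x\in\conv(V^k)}f(x)\in\{\min_{x\in\conv(V)}f(x)\mid\emptyset\neq V\subseteq X\}$, a finite set, and $f(x^{k+1})\le f(x^k)$ always holds because $x^k\in\conv(\{v\in V^k\mid\alpha^k_v>0\})\subseteq\conv(V^{k+1})$. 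Hence only finitely many non-terminating iterations can occur, which gives the claim.

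To get strict decrease at a non-terminating iteration $k$, I would build an affine subspace $L\ni x^k$ satisfying conditions (i)--(iii) of Lemma~\ref{theorem_improve}. Set $C^=:=\{j\mid\bar c_j^\top x^k+\tilde c_j=f(x^k)\}$, $\bar V^k:=\{v\in V^k\mid\alpha^k_v>0\}$, $S_0:=\lin\{v-x^k\mid v\in\bar V^k\}$, and $S:=\lin\{v-x^k\mid v\in V^{k+1}\}$, the direction space of $\aff(V^{k+1})$; let $W:=(\lin\{\bar c_j-\bar c_{j'}\mid j,j'\in C^=\})^\perp$ be the largest subspace along which all pieces of $f$ active at $x^k$ agree, so that $f$ restricted to any affine subspace through $x^k$ with direction in $W$ is differentiable at $x^k$. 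I would take $L:=x^k+(W\cap S)$, which makes (i) automatic and, since $L\subseteq\aff(V^{k+1})$, reduces (ii)--(iii) to producing some $w\in W\cap S$ with $w\notin S_0$: indeed $(c^k)^\top(v-x^k)=0$ for all $v\in\bar V^k$ (as $-c^k\in\mathcal N_{\conv(V^k)}(x^k)$ and $\alpha^k_v>0$), $(c^k)^\top(\hat x^k-x^k)<0$ by non-termination, and $S=S_0\oplus\R(\hat x^k-x^k)$, so that for $d\in W\cap S$ one has $(c^k)^\top d=0$ if and only if $d\in S_0$.

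The vector $w$ comes from a dimension count supplied by the perturbation. Taking $x^k$ to be a basic optimal solution of~\eqref{eq:probdiscrete2} as in the proof of Theorem~\ref{th:c_unique}, we have $|C^=|=|\bar V^k|=:r$ with probability one; $\bar V^k$ is affinely independent (an affine dependence among a positive-weight support would let us perturb $\alpha^k$ without leaving the feasible region, contradicting that $(z^k,\alpha^k)$ is a vertex), so $\dim S_0=r-1$; and $\hat x^k\notin\aff(\bar V^k)$ since $c^k$ is constant on $\aff(\bar V^k)$ but strictly smaller at $\hat x^k$, so $\dim S=r$ and $\hat x^k-x^k\notin S_0$. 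The one genuinely new genericity statement needed is that $W\cap S_0=\{0\}$ with probability one; equivalently $W^\perp\oplus S_0^\perp=\R^n$, equivalently the $r-1$ vectors $\bar c_j-\bar c_{j_0}$ spanning $W^\perp$ have linearly independent orthogonal projections onto the $(r-1)$-dimensional space $S_0$. This is a non-vanishing determinantal condition in the perturbed scenario entries; since $S_0$ depends only on the combinatorial datum $\bar V^k\subseteq X$ and $C^=$ on $\{1,\dots,m\}$, conditioning on the finitely many combinatorial types of the current iterate and applying a union bound gives $W\cap S_0=\{0\}$ almost surely. Then $W+S_0=\R^n$, so $\hat x^k-x^k=w+s_0$ with $w\in W$, $s_0\in S_0$; here $w=(\hat x^k-x^k)-s_0\in S$, hence $w\in W\cap S$, and $w\notin S_0$ because otherwise $\hat x^k-x^k\in S_0$. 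Lemma~\ref{theorem_improve} then yields $f(x^{k+1})<f(x^k)$, completing the proof.

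The step I expect to be the real obstacle is the probabilistic bookkeeping behind ``$|C^=|=|\bar V^k|$'' and ``$W\cap S_0=\{0\}$''. The subspaces $W$ and $S$ are not fixed a priori but depend on the perturbed scenarios through the entire run of the algorithm, so one cannot invoke genericity directly; the remedy is to first fix a combinatorial type — a choice of $\bar V^k$, $C^=$, $\hat x^k$ and of an optimal basis of~\eqref{eq:probdiscrete2} — turning the degeneracy conditions into honest polynomial conditions on the perturbations that are not identically zero, and the finiteness of $X$ is exactly what makes the union over types legitimate. Everything else (the descent step, Lemma~\ref{theorem_improve}, the finiteness of the value set $\{f(x^k)\}$) is then routine.
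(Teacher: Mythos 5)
Your proof is correct and follows essentially the same route as the paper's: reduce finite termination to the strict descent $f(x^{k+1})<f(x^k)$ at every non-terminating iteration via Lemma~\ref{theorem_improve}, and verify its hypotheses through the non-degeneracy count $|C^=|=|\bar V^k|$ inherited from the basic optimal solution of~\eqref{eq:probdiscrete2}, exactly as in the proof of Theorem~\ref{th:c_unique}. Your choice $L=x^k+(W\cap S)$ together with the explicit genericity condition $W\cap S_0=\{0\}$ makes condition~(iii) deterministic and spells out the union bound over combinatorial types, points the paper treats more loosely (it works with the maximal differentiability subspace and argues (iii) by a probability-zero orthogonality claim), but the underlying argument is the same.
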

\begin{proof}
  Consider any iteration~$k$ in which Algorithm~{\texttt{SD-DROP}} does not
  terminate. Then it suffices to show that~$f(x^{k+1})<f(x^k)$ with
  probability one. For this, let~$L$ be the maximal affine space such
  that~$x^k\in L$ and~$f|_L$ is differentiable in~$x^k$.  By the
  definition of~$f$, the epigraph~$\epi(f)$ is a polyhedron, and~$L$
  is obtained by projecting the minimal face of~$\epi(f)$
  containing~$(x^k,f(x^k))$ onto~$\R^n$ and taking the affine hull of
  the projection.  Thus~$L$ is an affine subspace of~$\R^n$
  containing~$x^k$, depending continuously
  on the perturbation of~$c_1,\dots,c_m$.
  We claim that the
  conditions~(ii) and~(iii) of Lemma~\ref{theorem_improve} are
  satisfied with probability one then, so that the result follows.
    Indeed, using the same notation as in the proof of Theorem~\ref{th:c_unique}, we have
    $\dim (L) = n- \dim (\partial f(x^k))= n- ( |C^=|-1)= n - (|V^k|-|A^=|-1)=n-\dim(\aff \bar{V}^k)$ with probability one, since $|C^=|+|A^=|=|V^k|$ with probability one. Thus, with probability one, we obtain $\dim(L) + \dim(\aff \bar{V}^k)=n$ and hence $\dim(L) + \dim(\aff V^{k+1})\geq n+1$. 
   Thus~(ii) holds with probability one.
  For showing~(iii), we use again that $\dim (L\cap
  \aff V^{k+1})\ge 1$ with probability one. This implies that the probability
  of the fixed vector~$c^k$ being orthogonal to~$L\cap \aff V^{k+1}$ is zero.
\end{proof}
Theorem~\ref{theorem_perturb} shows that cycling can be avoided by
applying small random perturbations to the scenarios~$c_1,\dots,c_m$,
e.g., by choosing~$(\hat c_j)_i\in
[(c_j)_i-\varepsilon,(c_j)_i+\varepsilon]$ uniformly at random for
some~$\varepsilon>0$, independently for all~$j=1,\dots,m$
and~$i=0,\dots,n$. As~$X$ is finite, the optimal solution of the
perturbed problem~\eqref{MinMaxProb} will agree with an optimizer of
the unperturbed problem if~$\varepsilon$ is small enough (even though
this is not true for the relaxation~\eqref{ContRel0}). Note that
Theorem~\ref{theorem_perturb} requires that also the constant in the
objective function is perturbed.
\begin{remark}
  In practice, the perturbation applied in
  Theorem~\ref{theorem_perturb} is not necessary, because small
  numerical errors arising in the optimization process will have
  the same effect. In our experiments described in
  Section~\ref{section:results}, we do not explicitly apply any
  perturbation.
\end{remark}
 
Note that Theorem~\ref{th:c_unique} also holds when eliminating
vertices. In particular, this implies that, when starting from the
same set~$V^k$, the next subgradient~$c^k$ is the same with or without
elimination (with high probability). However, in a later iteration,
the set~$V^k$ and hence the optimal solution~$x^k$ may be different in
the two cases, and thus also the subgradients.
In our experiments, we observed that vertices being eliminated were sometimes re-generated in subsequent iterations.

\section{Embedding \texttt{SD} into a branch-and-bound scheme}\label{sec:Embed_SD}

In order to solve Problem~\eqref{MinMaxProb} to optimality, we embed
Algorithm~\texttt{SD} into a branch-and-bound scheme, which we will
denote by~\texttt{BB-SD}. In this branch-and-bound scheme, we can
exploit some specific features of the SD algorithm. Firstly, all
binary vertices generated at some node of the branch-and-bound tree
can be reused in the child nodes. Indeed, if we branch on fractional
variables, each such vertex must be feasible in one of the child
nodes, and can thus be inherited. This initial set of vertices enables
us to \emph{warmstart} the SD algorithm at every child node.
Moreover, thanks to Lemma~\ref{lemma_lb}, every iteration of
\texttt{SD} leads to a valid lower bound on the solution of the convex
relaxation considered, meaning that \emph{early pruning} can be
performed. In other words, at every node we do not necessarily need to
solve the convex relaxation to optimality: the node can be pruned as
soon as \texttt{SD} computes a lower bound greater than the current
upper bound.

As emphasized above, we assume that Problem~\eqref{CertainProb} can be
accessed only by an optimization oracle.  Therefore, even when dealing
with specific combinatorial problems, we do not exploit any structure
to define primal heuristics within \texttt{BB-SD}.  Nevertheless, at
every node of \texttt{BB-SD}, we easily get an upper bound by
evaluating the objective function on all the generated extreme points
and taking the minimal value among them.

Having no problem-specific
heuristics, we need an enumeration strategy that provides primal
solutions quickly. Thus, we adopt a \emph{depth first search
  (DFS)}. Moreover, the branching rule implemented within
\texttt{BB-SD} branches on variables that are fractional in the
continuous relaxation, by means of the canonical disjunction. More
precisely, we branch on the variable with the fractional part closest
to one and produce two child nodes: in the node considered first, we
fix the branching variable to $1$, in the other node we fix it to $0$.
This choice, combined with DFS, typically allows to quickly find
integer solutions, which are sparse for many combinatorial problems.
Note that all nodes in \texttt{BB-SD} remain feasible. Indeed,
regardless of how we select the fractional variable~$x_i$ to branch
on, the oracle must have already returned solutions with both~$x_i=0$
and~$x_i=1$.

\section{Numerical results}\label{section:results}

To test the performance of our algorithm~\texttt{SD} and of the
branch-and-bound scheme~\texttt{BB-SD}, we considered instances of
Problem~\eqref{MinMaxProb} with two different underlying problems: the
Spanning Tree problem (Section~\ref{sec:num-stp}) and the Traveling
Salesman problem (Section~\ref{sec:num-tsp}).  The standard models for
these problems use an exponential number of constraints that can be
separated efficiently. In the case of the Spanning Tree problem, this
exponential set of constraints yields a complete linear formulation,
while this is not the case for the NP-hard Traveling Salesman problem.
For the robust Minimum Spanning Tree Problem, we report a comparison
between~\texttt{BB-SD} and the MILP solver of \texttt{CPLEX}~\cite{cplex-url}. 
For the robust Traveling Salesman Problem, we focus on the continuous
relaxations, thus reporting a comparison on the bounds obtained at the
root node of the branch-and-bound tree.
 
In the implementation of \texttt{SD}, for both the robust Minimum
Spanning Tree Problem (r-MSTP) and the robust Traveling Salesman
Problem (r-TSP), oracle LIN-O is defined according to the underlying
problem: for the r-MSTP we implemented the standard Kruskal
algorithm~\cite{kruskal1956shortest}, a well-known polynomial-time
algorithm. For the r-TSP, we used the implementation of the solver
\texttt{Concorde}~\cite{concorde}.  Since the TSP is NP-hard,
the computational times needed to call the linear oracles differ
significantly in the two problems, as seen later in the numerical
experiments.

Except for the oracle LIN-O, we used exactly the same implementation
for both problems. In particular, we applied the same oracle SIM-O for
both r-MSTP and r-TSP. Problem~\eqref{eq:probdiscrete} is rewritten by
expanding the condition $x\in \conv(V^k)$. By using the LP
formulation~\eqref{eq:probdiscrete2} and eliminating the $x$
variables, we obtain the following equivalent formulation:
 \begin{equation}\label{eq:probdiscrete_alpha}
\begin{array}{l l}
        \min & z\\[1ex]
        \textnormal{ s.t. }& \sum_{v\in V^k} \tilde{c}_j^v \alpha_v \leq z,\quad j=1,\ldots,m\\[1.2ex]
        & \sum_{v\in V^k} \alpha_v=1 \\[1.2ex]
        & \alpha_v\geq 0,\quad  v\in V^k,
\end{array}
\end{equation}
where $\tilde{c}_j^v=c_j^\top v$, for every $v\in V^k$.
Problem~\eqref{eq:probdiscrete_alpha} is solved with \texttt{CPLEX}.  Note that
the number of constraints depends on the number of scenarios, and the
number of variables corresponds to the cardinality of $V^k$ and thus
increases at every iteration in our approach~\texttt{SD}. The
dropping rule implemented in~\texttt{SD-DROP} may reduce the size of this
problem, thus potentially leading to practical improvements in the
running time.

\subsection{Spanning Tree Problem}\label{sec:num-stp}

Given an undirected weighted graph~$G=(N,E)$, a minimum spanning tree
is a subset of edges that connects all vertices, without any cycles
and with the minimum total edge weight. We use the following
formulation of the Robust Minimum Spanning Tree problem:
\begin{equation}\label{mst}\tag{r-MSTP}
  \begin{array}{rrcll}
    \min & \multicolumn{4}{l}{\max_{c\in U} c^\top x}\\[1.5ex]
    \textnormal{ s.t.} & \sum_{(u,v)\in E}x_{u,v} & = & |N|-1  \\[1ex]
    & \sum_{u,v\in X} x_{u,v} & \le & |X|-1 & \forall \,\emptyset\neq X\subseteq N\\[1ex]
    & x & \in & \{0,1\}^{E}
  \end{array}
\end{equation}
The objective function can easily be linearized by introducing a new
variable~$z\in\R$ and constraints~$z\ge c^\top x$ for all~$c\in U$.
In the above model, the number of inequalities is exponential in the 
input size, hence we have to use a separation algorithm
within~\texttt{CPLEX}.

For our experiments, we consider a benchmark of randomly generated
instances of r-MSTP.  We build complete graphs of five different sizes
(from $20$ to $60$ nodes).  The nominal costs are real numbers
randomly chosen in the interval $[1,2]$. For each size we randomly
generate $10$ different nominal cost vectors.  The scenarios $c\in U$
are generated by adding to the vector of nominal costs a random unit
vector, multiplied by a scalar factor $\beta$. We consider three
such factors~$1$,~$2$, and~$3$, and generate three different numbers of
scenarios (\#sc) $10$, $100$, and~$1000$. In total, then, we have a benchmark of
450 instances.

In a first experiment, we analyze different dropping rules within
algorithm \texttt{SD}.  Then, we show how performing a warm start
along the branch-and-bound iterations leads to a significant reduction
in terms of number of iterations compared to a cold start.  Finally,
our branch-and-bound method \texttt{BB-SD} is compared to the
MILP solver of \texttt{CPLEX}. Within \texttt{CPLEX}, we apply a
dynamic separation algorithm using a callback adding lazy constraints,
adopting a simple implementation based on the Ford-Fulkerson
algorithm.

\subsubsection{Comparison of dropping rules}

In this section, we focus on the performance of algorithm \texttt{SD}
for solving the continuous relaxation of our r-MSTP instances.  In
particular, we compare the performance of \texttt{SD} implementing
three different dropping rules.
Since all instances with 10 or 100 scenarios are solved in less than $0.1$~seconds, we only consider the
continuous relaxations of instances with $1000$ scenarios,
meaning that the evaluation is carried out on $150$ instances.
As dropping rules, we implemented the
following:
\begin{itemize}
\item \texttt{d0}: meaning that no dropping rule is applied within
  \texttt{SD}
\item \texttt{d1}: meaning that we update the set $V^k$ according to
  condition~\eqref{eq:elrule} defined in
  Section~\ref{sec:eliminate}, i.e., we eliminate all vertices $v\in
  V^k$ such that $\alpha_v^k=0$ at every iteration of \texttt{SD}
\item \texttt{d2}: meaning that we eliminate vertices $v\in V^k$ such that
  $\alpha_v^k=0$ \emph{only if} they provide a strict ascent
  direction, i.e., if~$(c^k)^\top (v-x^k)\geq
  \varepsilon$ with a fixed threshold $\varepsilon>0$.
\end{itemize}
As mentioned before, for each~$|N|$ 
we built $30$
instances, $10$ for each factor~$\beta$. In Table~\ref{tab:elimCR}, we report
the average running times (time) and the average numbers of iterations
(\#it) for each version of~\texttt{SD}; note that the number~$n$ of variables in~\eqref{CertainProb} is given by~$|E|$ here.
\begin{figure}[H]
    \centering
    \includegraphics[trim={1.5cm 0.5cm 1cm 0},clip,width=0.45\textwidth]{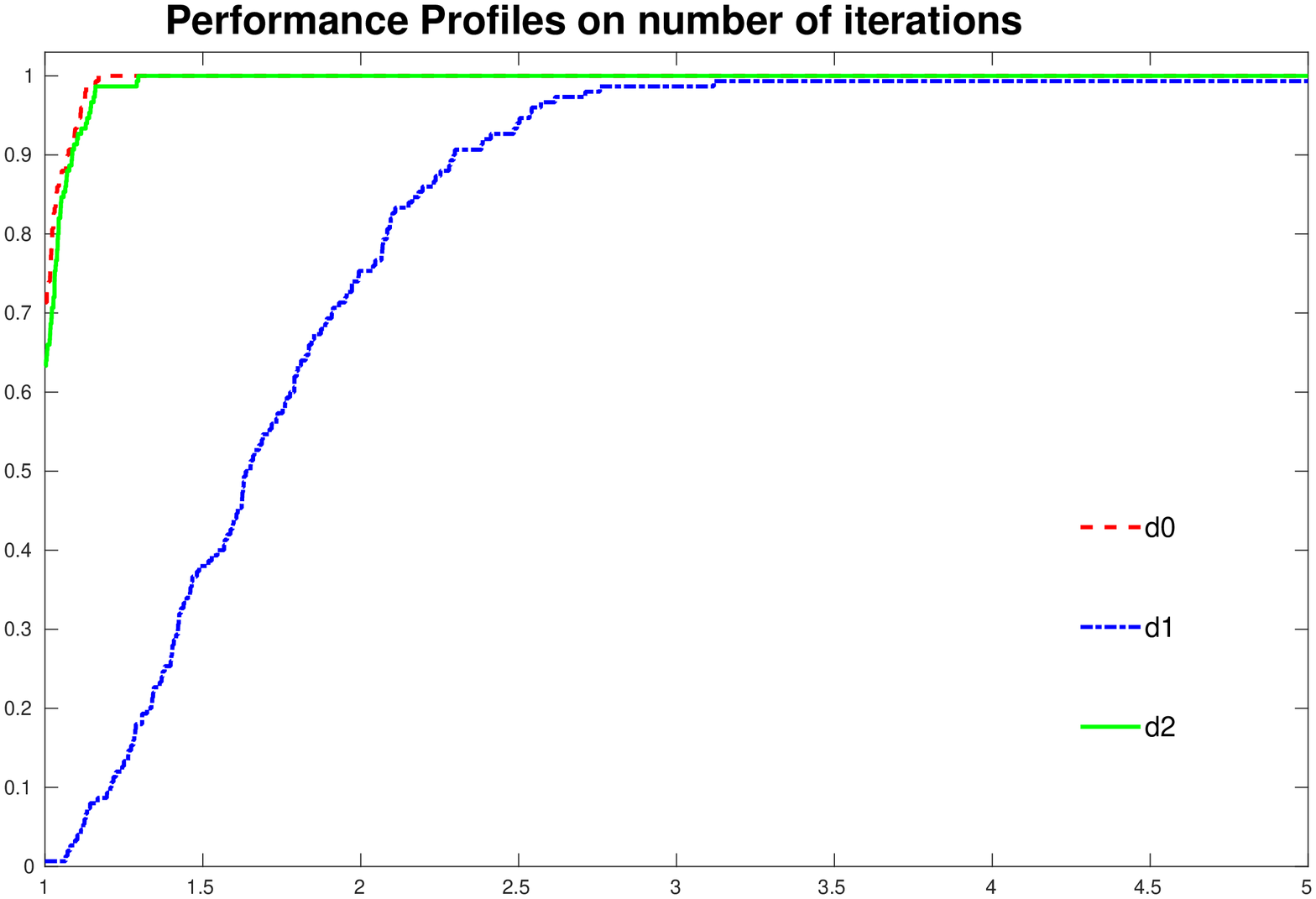}
    \includegraphics[trim={1.5cm 0.5cm 1cm 0},clip,width=0.45\textwidth]{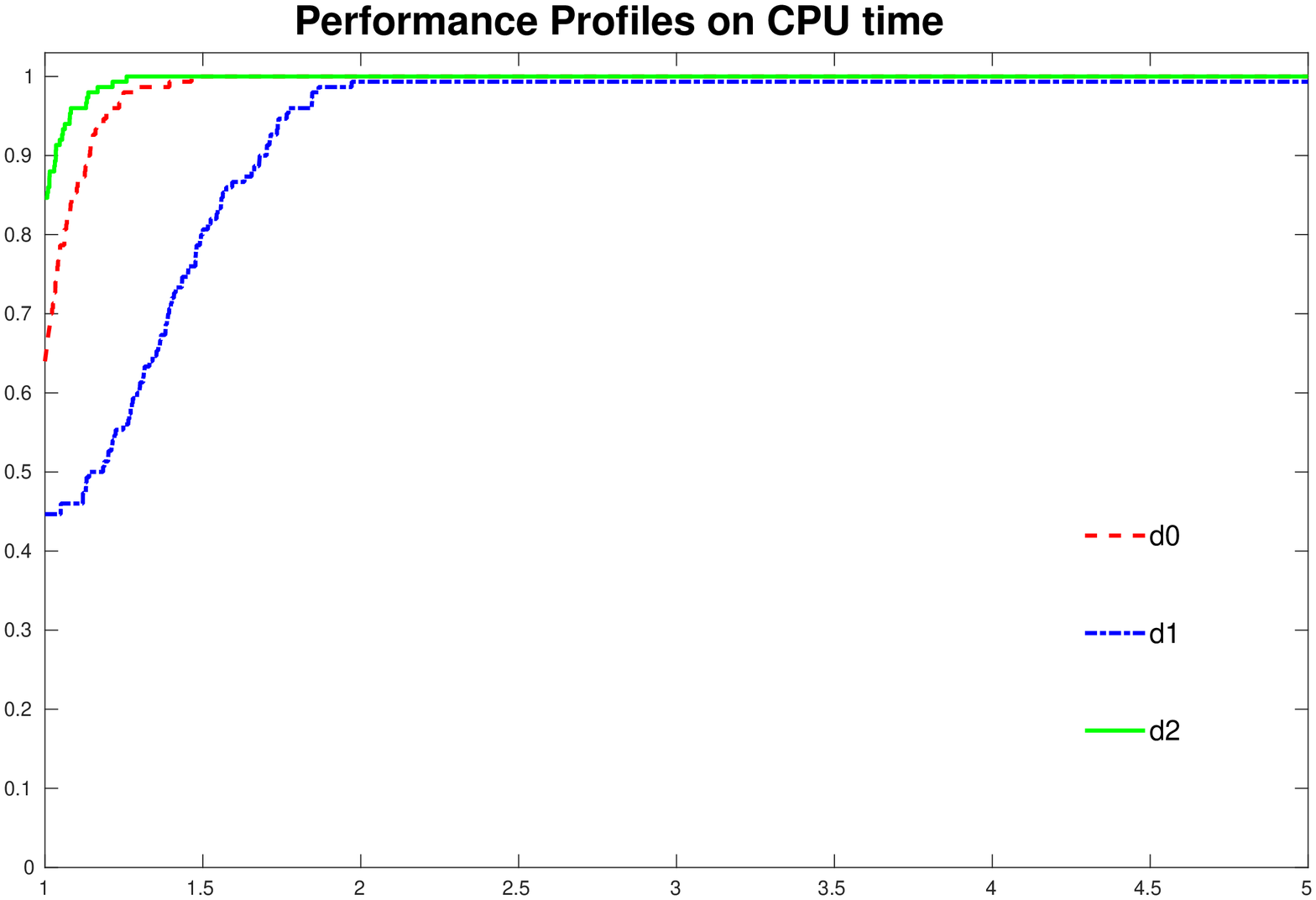}
    \caption{Comparison between different dropping rules ($1000$ scenarios).}
    \label{fig:elimCR}
\end{figure}
\begin{table}
  \centering      
  {\scalebox{.8}{
  \begin{tabular}{|cc||r|r||r|r||r|r|} 
    \hline
    \multicolumn{2}{|c||}{} & \multicolumn{2}{c||}{\texttt{d0}}  & \multicolumn{2}{c||}{\texttt{d1}} 
    & \multicolumn{2}{c|}{\texttt{d2}} \\
    \hline
    \hline
    $|N|$	&	$|E|$	& time	&	\#it	&	time	&	\#it	&	time	&	\#it	\\
    \hline
20	&	190	&0.27	&	69.3	&	0.30	&	105.0	&	0.25	&	73.1	\\
30	&	435	&0.85	&	111.1	&	0.95	&	184.4	&	0.73	&	110.6	\\
40	&	780	&1.72	&	152.2	&	2.31	&	295.2	&	1.59	&	155.6	\\
50	&	1225	&2.55	&	173.6	&	3.74	&	353.6	&	2.44	&	172.4	\\
60	&	1770	&3.36	&	208.5	&	4.65	&	416.6	&	3.28	&	205.5	\\
\hline
  \end{tabular}\vspace*{0.1cm}
  }}
  \caption{Comparison between different dropping rules ($1000$ scenarios).}
  \label{tab:elimCR}  
\end{table}
We further report the performance profiles~\cite{DM2002} related to
the CPU time and the number of iterations in Fig.~\ref{fig:elimCR}.
We notice that dropping rule \texttt{d2} allows \texttt{SD} to have
slightly better performance in terms of CPU time, despite being 
not always better with respect to \texttt{SD} with no dropping rule in
terms of number of iterations.
  Indeed, in the MST problem, the linear oracle calls are
extremely fast, while most of the computational time is needed to
solve the master problem~\eqref{eq:probdiscrete_alpha}. 
This explains why, although some
more iterations are needed, dropping some vertices and hence reducing
the size of the master problems can improve the overall performance of
the algorithm.  On the other hand, it is clear from the results that
eliminating all inactive vertices as in Algorithm SD-DROP (rule
\texttt{d1}) is not beneficial for \texttt{SD} as both the number of
iterations and the CPU time increase.

\subsubsection{Warmstart benefits}

In the following, we evaluate our branch-and-bound method
\texttt{BB-SD}. In particular, we will compare the performance
of~\texttt{BB-SD} considering both \texttt{SD} with no dropping rule
(\texttt{d0}) and \texttt{SD} with dropping rule (\texttt{d2}).  The
comparison is done on all $450$ instances of r-MST. As mentioned
before, for each combination of $|N|$ and \#sc, 
we built $30$ instances, $10$ for each value of the scalar factor $\beta$. 
In Table~\ref{tab:WSe0}, we first compare the performance of
\texttt{BB-SD} by considering \texttt{SD} with no dropping rule with
and without warmstart (\texttt{d0 no ws} vs \texttt{d0 with ws}).  In
the table, we report the number of instances solved within the time
limit of one hour (\#sol),
the average running times (time),
the average number of nodes (\#nodes)
and the average number of iterations (\#it) for each version
of~\texttt{SD}. All averages are taken over 
the instances solved within the time limit. 
It is clear from the results that the warmstart leads
to a considerable decrease in the average number of iterations and
consequently a significant decrease in CPU time.
The same behavior can be noticed when looking at Table~\ref{tab:WSe2},
where we compare \texttt{BB-SD} using
dropping rule \texttt{d2} with and without warmstart (\texttt{d2 no
  ws} vs \texttt{d2 with ws}).
\begin{table}
  \centering      
  {\scalebox{.8}{
  \begin{tabular}{|ccc||r|r|r|r||r|r|r|r|} 
    \hline
    \multicolumn{3}{|c||}{} & \multicolumn{4}{c||}{\texttt{d0 no ws}}  & \multicolumn{4}{c|}{\texttt{d0 with ws}} 
    \\
    \hline
    \hline
     $|N|$	&	$|E|$ & \#sc &  \#sol& time	& \#nodes &	\#it & \#sol &	time	& \#nodes &	\#it	\\
    \hline
     20 &  190 & 10   & 30 &     1.49 &      6.72e+2 &      8.00e+3 &  30 &     0.84 &      6.69e+2 &      4.23e+3 \\ 
        &      & 100  & 30 &    60.03 &      6.85e+3 &      1.63e+5 &  30 &    33.44 &      6.87e+3 &      8.56e+4 \\
        &      & 1000 & 24 &   476.59 &      6.74e+3 &      1.66e+5 &  27 &   588.38 &      1.24e+4 &      1.78e+5 \\
    \hline 
    30 &     435 & 10   &   30 &    13.12 &     3.15e+3 &      5.97e+4 &  30 &     7.34 &      3.52e+3 &      3.21e+4 \\
       &         & 100  &  25 &   602.07 &      3.55e+4 &      1.15e+5 &  27 &   451.05 &      5.04e+4 &      8.29e+5 \\
       &         & 1000 &  11 &   493.33 &      3.47e+3 &      8.30e+4 &  14 &   642.41 &      8.64e+3 &      1.12e+5 \\
    \hline 
    40 &    780 & 10 & 30 &   110.32 &      1.59e+4 &      4.10e+5 &  30 &    51.77 &      1.59e+4 &      1.91e+5 \\
       &        & 10 & 17 &   764.88 &      3.24e+4 &      1.10e+6 &  19 &   632.17 &      5.25e+4 &      8.65e+5 \\
       &        & 10 &  8 &  1592.79 &      7.67e+3 &      1.78e+5 &   9 &   837.77 &      9.12e+3 &      9.24e+4 \\
    \hline 
    50 &    1225 & 10  & 30 &   441.38 &      4.35e+4 &      1.32e+6 &  30 &   201.05 &      4.40e+4 &      6.10e+5 \\
       &         & 100 & 10 &   137.71 &      7.26e+3 &      1.53e+5 &  13 &   558.02 &      3.47e+4 &      5.81e+5 \\
       &         & 1000&  7 &  1209.52 &      3.83e+3 &      9.81e+4 &   9 &   810.38 &      5.92e+3 &      6.64e+4 \\
    \hline
    60 &    1770 & 10   & 25 &   513.71 &      4.21e+4 &      1.28e+6 &  30 &   575.38 &      9.81e+4 &      1.47e+6 \\
       &         & 100  &   9 &   240.12 &      9.04e+3 &     2.09e+5 &  12 &   689.81 &      3.50e+4 &      5.77e+5 \\
       &         & 1000 &  2 &   867.80 &      2.08e+3 &      5.77e+4 &   3 &  1136.77 &      6.47e+3 &      7.58e+4 \\
    \hline
  \end{tabular}\vspace*{0.1cm}
  }}
  \caption{Using~\texttt{BB-SD} with and without warmstart.}
  \label{tab:WSe0}  
\end{table}
\begin{table}
  \centering  
  {\scalebox{.8}{
  \begin{tabular}{|ccc||r|r|r|r||r|r|r|r|} 
    \hline
    \multicolumn{3}{|c||}{} & \multicolumn{4}{c||}{\texttt{d2 no ws}}  & \multicolumn{4}{c|}{\texttt{d2 with ws}} 
    \\
    \hline
    \hline
    $|N|$	&	$|E|$	& \#sc & \#sol& time	& \#nodes &	\#it & \#sol &	time	& \#nodes &	\#it	\\
    \hline
    20 &     190 &  10   & 30 &     1.55 &      6.75e+2 &      8.25e+3 &  30 &     0.88 &      6.82e+2 &      4.39e+3 \\
       &         &  100  & 30 &    61.11 &      6.85e+3 &      1.69e+5 &  30 &    34.38 &      6.88e+3 &      8.82e+4 \\
       &         &  1000 & 24 &   480.61 &      6.74e+3 &      1.70e+5 &  27 &   592.45 &      1.24e+4 &      1.82e+5 \\ 
   \hline
   30 &     435 & 10   & 30 &    14.62 &      3.39e+3 &      6.52e+4 &  30 &     7.58 &      3.52e+3 &      3.24e+4 \\ 
      &         & 100  & 25 &   614.89 &      3.55e+4 &      1.16e+6 &  27 &   461.70 &      5.04e+4 &      8.34e+5 \\
      &         & 1000 & 12 &   695.62 &      4.76e+3 &      1.27e+5 &  14 &   576.20 &      7.51e+3 &      9.94e+4 \\
   \hline
   40 &     780 & 10   & 30 &   115.76 &      1.57e+4 &      4.10e+5 &  30 &    53.22 &      1.56e+4 &      1.88e+5 \\
      &         & 100  & 17 &   776.65 &      3.21e+4 &      1.08e+6 &  18 &   473.66 &      3.88e+4 &      6.37e+5 \\
      &         & 1000 & 7 &  1266.34 &       6.69e+3 &      1.42e+5 &   9 &   856.06 &      9.13e+3 &      9.26e+4 \\
   \hline
   50 &    1225 & 10   & 30 &   523.91 &      4.73e+4 &      1.44e+6 &  30 &   229.78 &      4.71e+4 &      6.52e+5 \\
      &         & 100  & 10 &   143.25 &      7.26e+3 &      1.53e+5 &  13 &   580.32 &      3.41e+4 &      5.75e+5 \\
      &         & 1000 & 7 &  1194.76 &      3.83e+3 &       9.81e+4 &   9 &   848.74 &      5.92e+3 &      6.64e+4 \\
   \hline
   60 &    1770 & 10   & 25 &   592.07 &      4.26e+4 &      1.29e+6 &  30 &   584.95 &      9.07e+4 &      1.36e+6 \\
      &         & 100  & 9 &   249.59 &      9.04e+3 &      2.09e+5 &  12 &   718.41 &      3.50e+4 &      5.77e+5 \\
      &         & 1000 & 2 &   859.83 &      2.08e+3 &      5.77e+4 &   3 &  1188.03 &      6.47e+3 &      7.58e+4 \\
   \hline
  \end{tabular}\vspace*{0.1cm}
  }}
  \caption{Using~\texttt{BB-SD} with and without warmstart, applying dropping rule \texttt{d2}.}
  \label{tab:WSe2}  
\end{table}

In Fig.~\ref{fig:compWStime}, we further report the performance
profiles with respect to the CPU time of the four versions
of~\texttt{BB-SD}. The versions of~\texttt{BB-SD} with no elimination
(\texttt{d0}) and with dropping rule \texttt{d2} show very similar
performances. Note that from our results it is clear that the
instances become harder with a higher number of scenarios. We can
also notice that~\texttt{BB-SD} with \texttt{d2 with ws} shows
slightly better performances when looking at the hardest instances,
namely those with~$1000$ scenarios.
\begin{figure}
    \centering
    \includegraphics[trim={1.5cm 0.5cm 1cm 0},clip,width=0.65\textwidth]{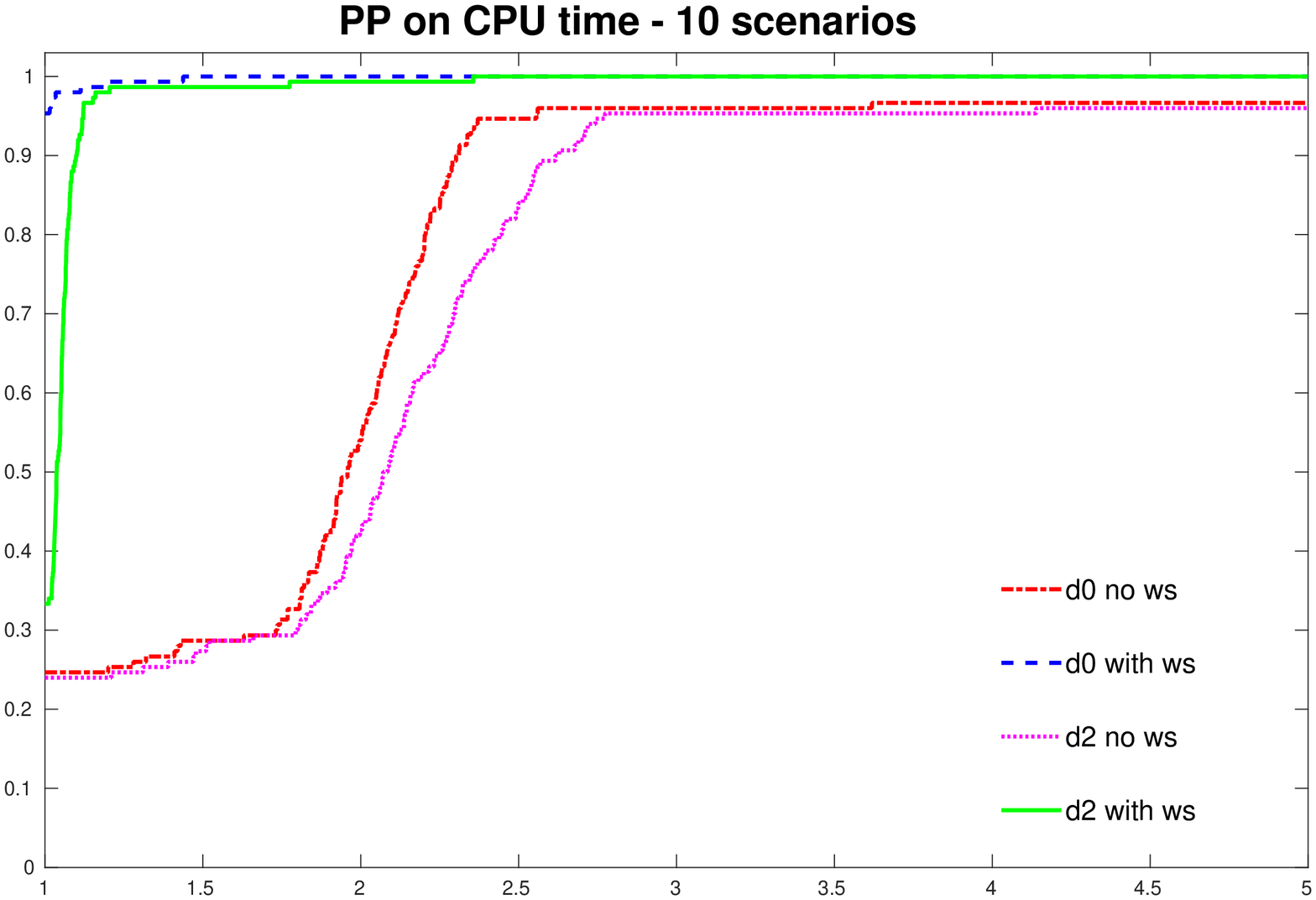}
    \includegraphics[trim={1.5cm 0.5cm 1cm 0},clip,width=0.65\textwidth]{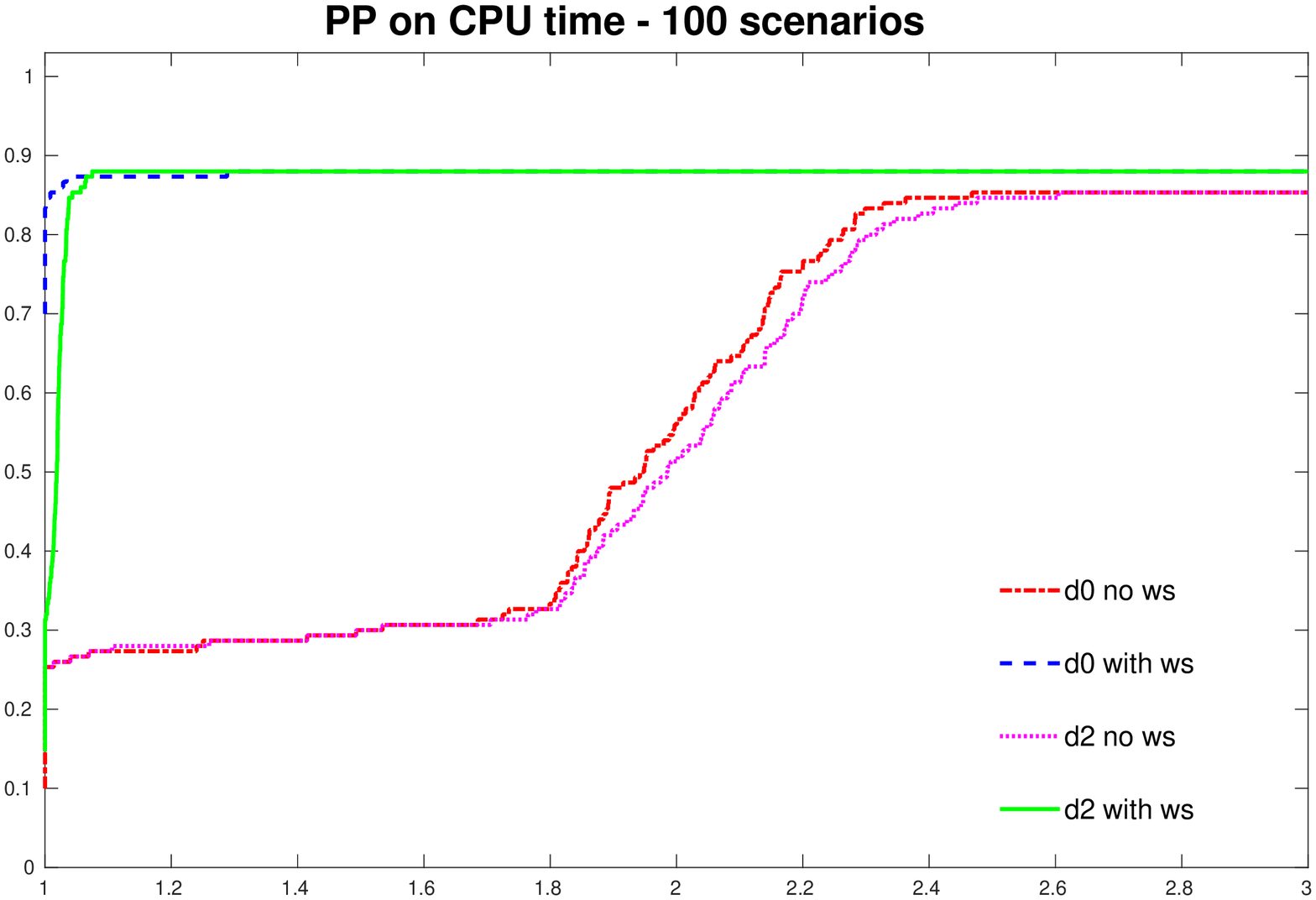}
    \includegraphics[trim={1.5cm 0.5cm 1cm 0},clip,width=0.65\textwidth]{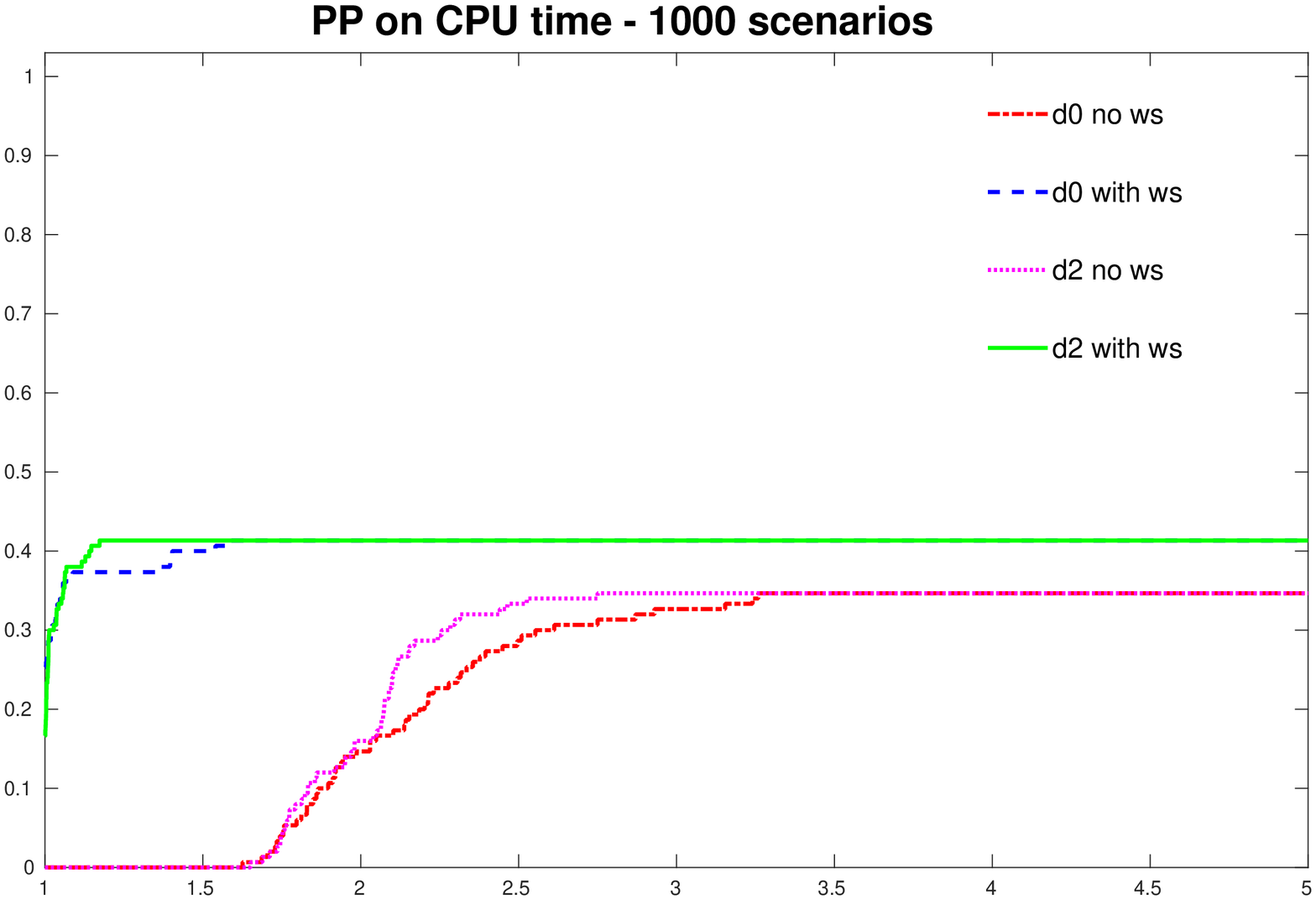}
    \caption{Comparison of \texttt{BB-SD} with and without warmstart.}
    \label{fig:compWStime}
\end{figure}

\subsubsection{Comparison between BB-SD and CPLEX}

We now compare our branch-and-bound algorithm~\texttt{BB-SD} with
\texttt{CPLEX} on our r-MSTP instances. As already mentioned, within
the MILP solver of \texttt{CPLEX}, we apply a dynamic separation
algorithm using a callback adding lazy constraints, adopting a simple
implementation based on the Ford-Fulkerson algorithm.  The comparison
is made with \texttt{BB-SD} implementing the dropping rule~\texttt{d2}
and allowing warmstart.  In Table~\ref{tab:WS}, we report for each
combination of $|N|$ and \#sc, 
the number of
instances solved within the time limit of one hour (\#sol), the average running
times (time) and the average number of nodes (\#nodes). We recall that
the averages are taken considering the results on $30$ instances each.
Performance profiles are presented in Fig.~\ref{fig:compCPLEX}.

We can notice that \texttt{BB-SD} strongly ouperforms \texttt{CPLEX}
when considering instances with~$10$ and~$100$ scenarios.  As already
highlighted before, the higher the number of scenarios, the harder the
instances become. Still, also when dealing with instances
containing~$1000$ scenarios, \texttt{BB-SD} shows better performance
with respect to \texttt{CPLEX}. On instances with~$10$ scenarios, we
see that~\texttt{BB-SD} is able to solve all instances within the time
limit, while~\texttt{CPLEX} fails in~$46$ cases. For instances on~$100$
scenarios,~\texttt{BB-SD} and~\texttt{CPLEX} show~$50$ and
$75$~failures, respectively, while for instances with~$1000$
scenarios,~\texttt{BB-SD} and~\texttt{CPLEX} show~$88$ and~$95$
failures.
 \begin{table}
	\centering   
	{\scalebox{.8}{
	\begin{tabular}{|ccc||r|r|r||r|r|r|} 
	  \hline
	  \multicolumn{3}{|c||}{}
	  & \multicolumn{3}{c||}{\texttt{BB-SD}}  
	  & \multicolumn{3}{c|}{\texttt{CPLEX}}\\
 	  \hline
	  \hline
	  $|N|$	&	$|E|$	& \#sc &  \#sol& time	&	\#nodes& \#sol	&	time	&	\#nodes\\
	  \hline
	  20 &     190 & 10   & 30 &     0.88 &      6.81e+2 &  30 &     0.36 &      2.15e+3 \\
	     &         & 100  & 30 &    34.38 &      6.88e+3 &  30 &    11.85 &      3.65e+4 \\
	     &         & 1000 & 27 &   592.45 &      1.24e+4 &  30 &   295.54 &      1.47e+5 \\
	  \hline
	  
	  30 &     435 & 10   & 30 &     7.58 &      3.52e+3 &  30 &    39.19 &      6.80e+4 \\
	     &         & 100  & 27 &   461.70 &      5.04e+4 &  27 &   399.98 &      7.91e+5 \\
	     &         & 1000 & 14 &   576.20 &      7.51e+3 &  17 &   649.76 &      2.42e+5 \\
	  \hline
	  
	  40 &     780 & 10   & 30 &    53.22 &      1.56e+4 &  21 &   304.25 &      3.56e+5 \\
	     &         & 100  & 18 &   473.66 &      3.88e+4 &  11 &   897.02 &      9.29e+5 \\
	     &         & 1000 &  9 &   856.06 &      9.13e+3 &   4 &  1159.91 &      3.29e+5 \\
	  \hline
	  
	  50 &    1225 & 10   & 30 &   229.78 &      4.71e+4 &  12 &   855.83 &      7.43e+5 \\
	     &         & 100  & 13 &   580.32 &      3.41e+4 &   4 &   685.22 &      5.17e+5 \\
	     &         & 1000 &  9 &   848.74 &      5.92e+3 &   3 &  3247.92 &      5.36e+5 \\
	  \hline
	  
	  60 &    1770 & 10   & 30 &   584.95 &      9.07e+4 &  11 &   596.26 &      4.24e+5 \\
	     &         & 100  & 12 &   718.41 &      3.50e+4 &   3 &  1167.98 &      9.92e+5 \\
	     &         & 1000 &  3 &  1188.03 &      6.47e+3 &   1 &   577.20 &      8.76e+4 \\
	  
	  \hline
	\end{tabular}\vspace*{0.1cm}
	}}
	\caption{Comparison between \texttt{BB-SD} and \texttt{CPLEX} on r-MST instances.}
	\label{tab:WS}  
 \end{table}

\begin{figure}
    \centering
    \includegraphics[trim={1.5cm 0.5cm 1cm 0},clip,width=0.49\textwidth]{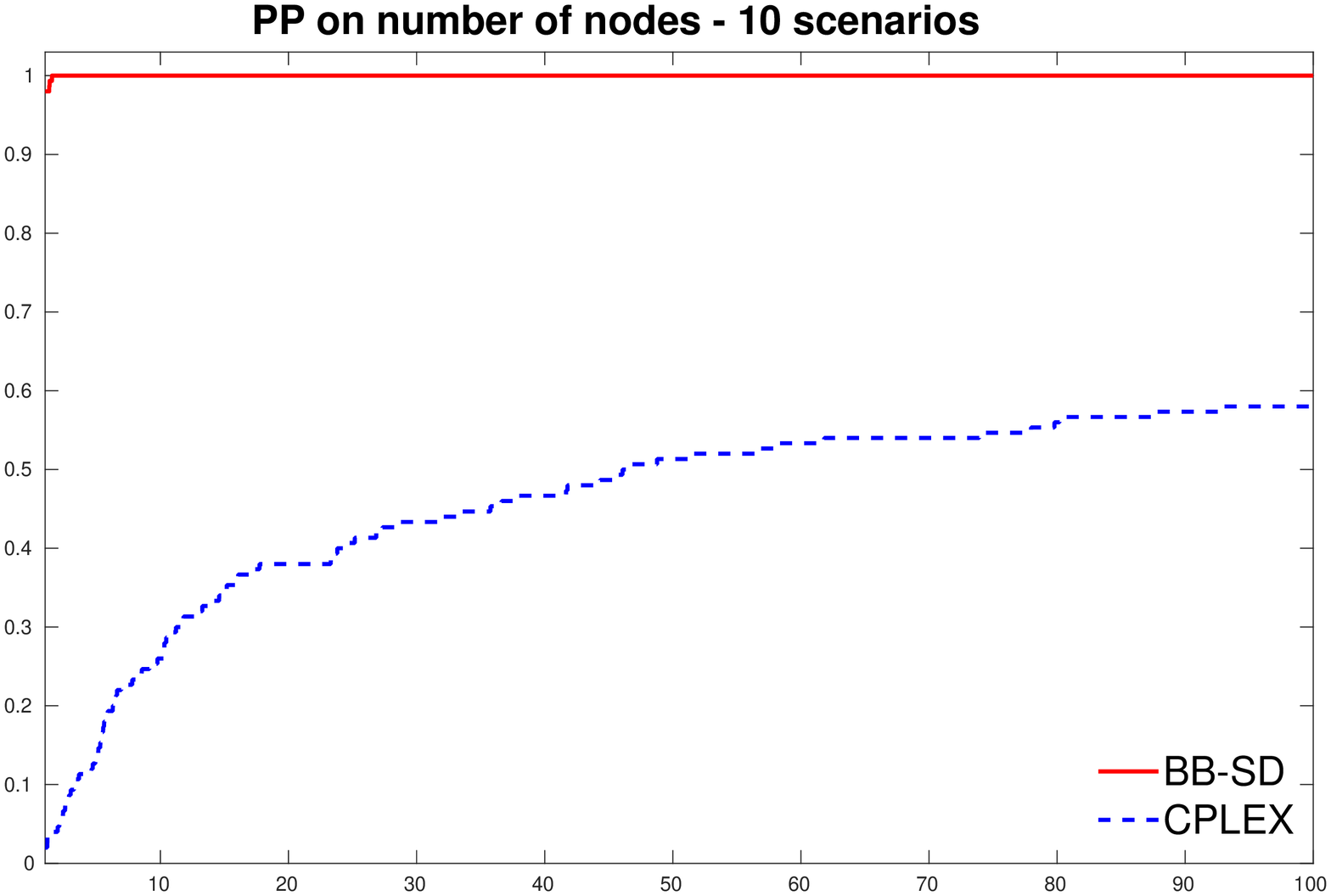}
    \includegraphics[trim={1.5cm 0.5cm 1cm 0},clip,width=0.49\textwidth]{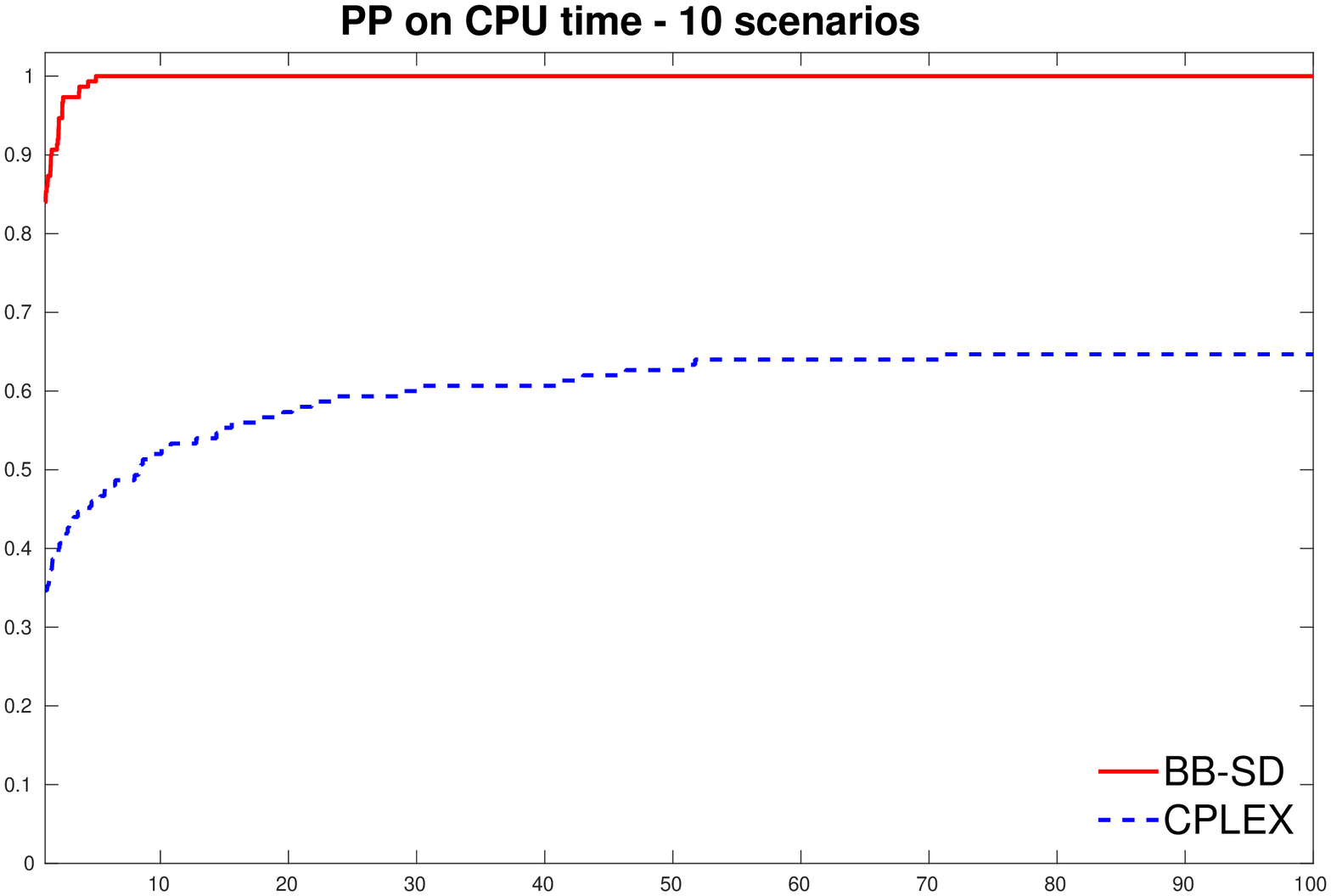}
    \\
    \includegraphics[trim={1.5cm 0.5cm 1cm 0},clip,width=0.49\textwidth]{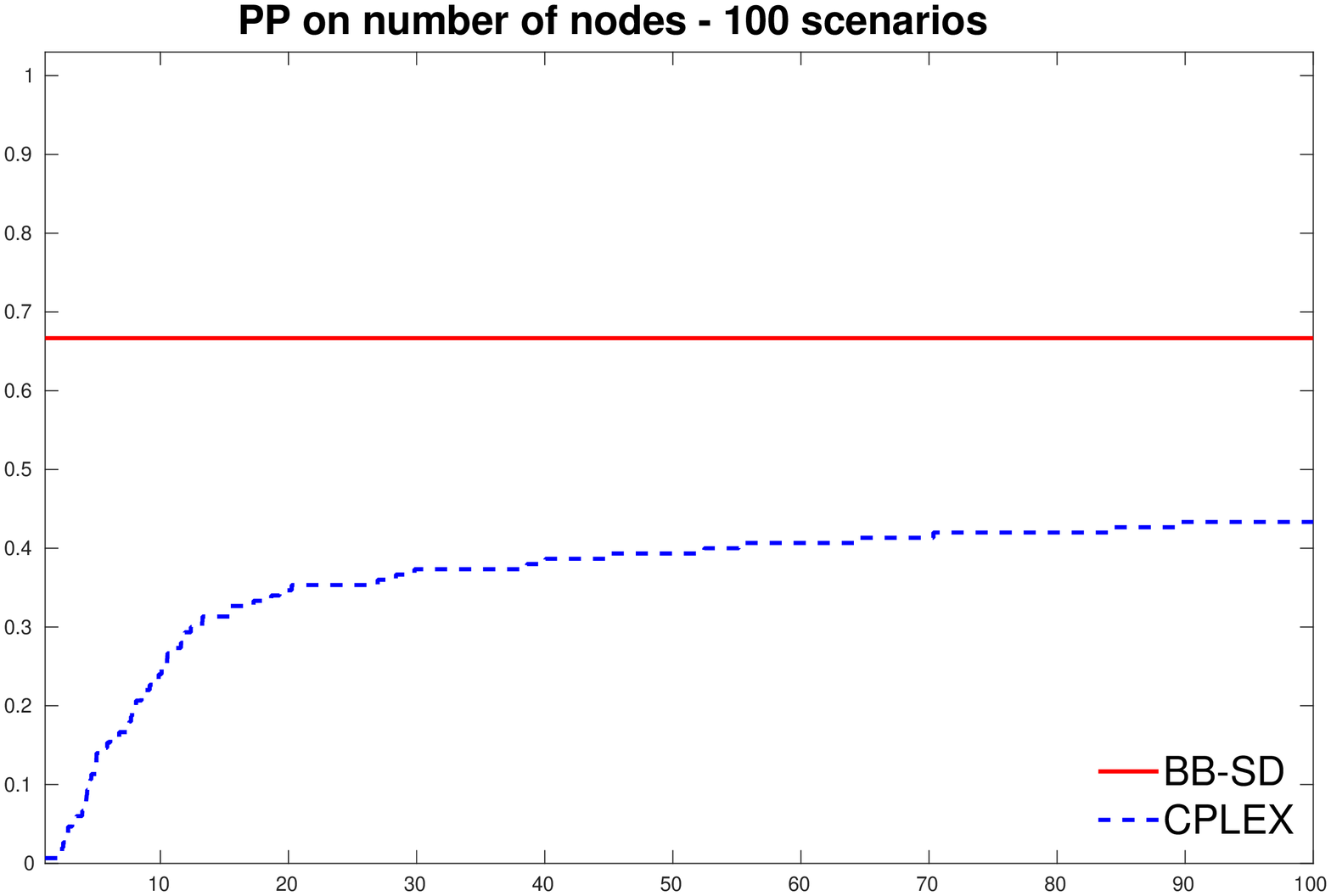}
    \includegraphics[trim={1.5cm 0.5cm 1cm 0},clip,width=0.49\textwidth]{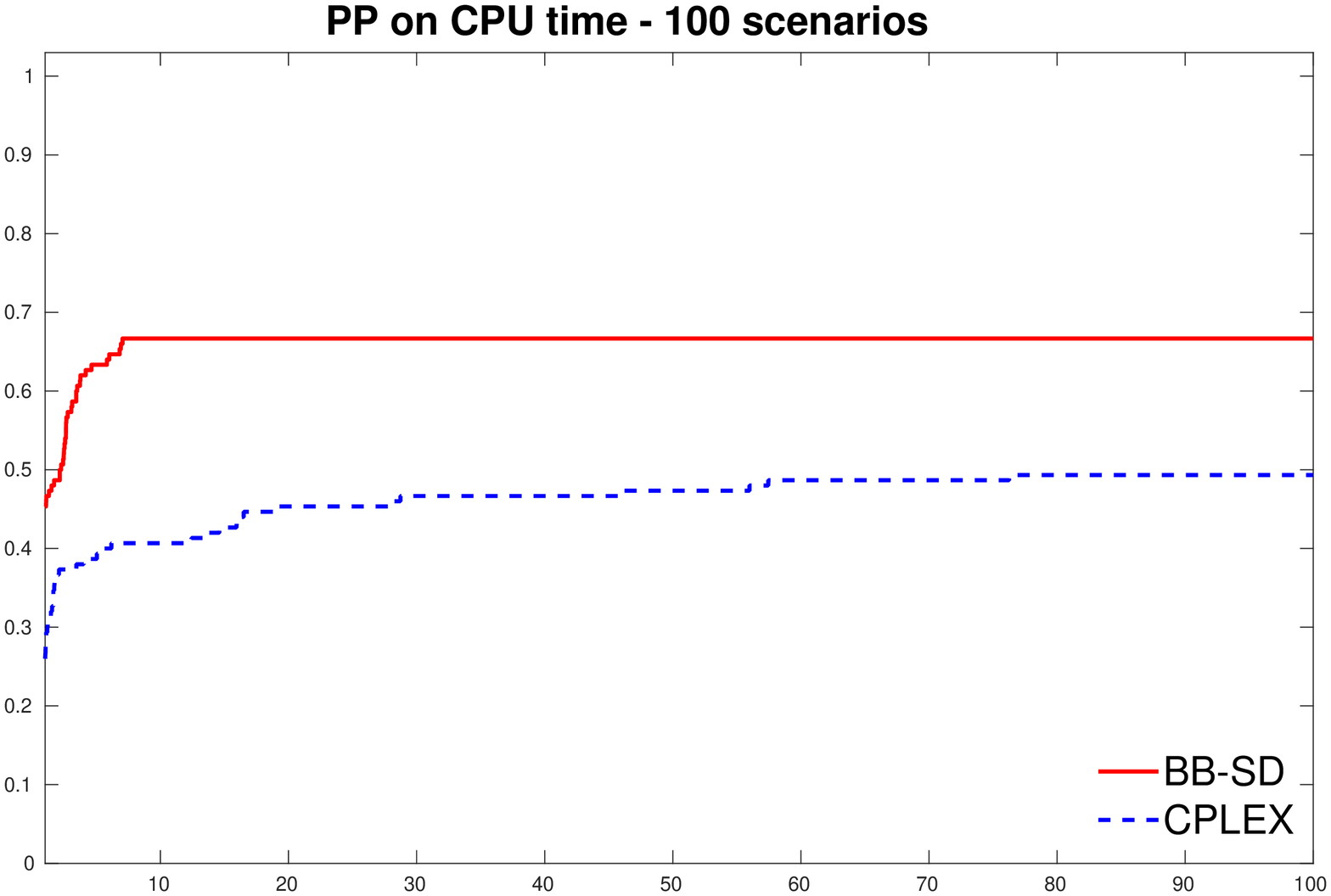}
    \\
    \includegraphics[trim={1.5cm 0.5cm 1cm 0},clip,width=0.49\textwidth]{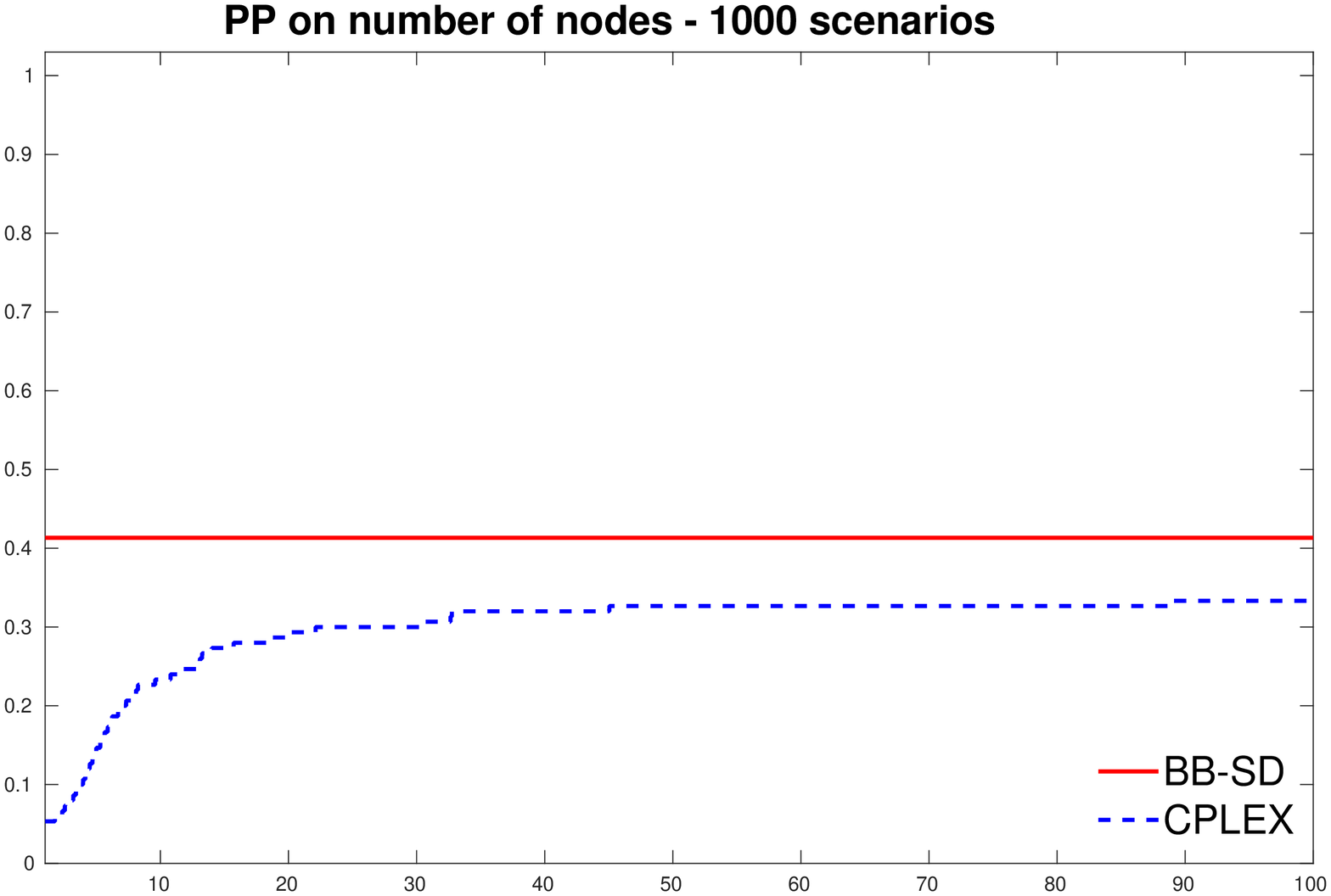}
    \includegraphics[trim={1.5cm 0.5cm 1cm 0},clip,width=0.49\textwidth]{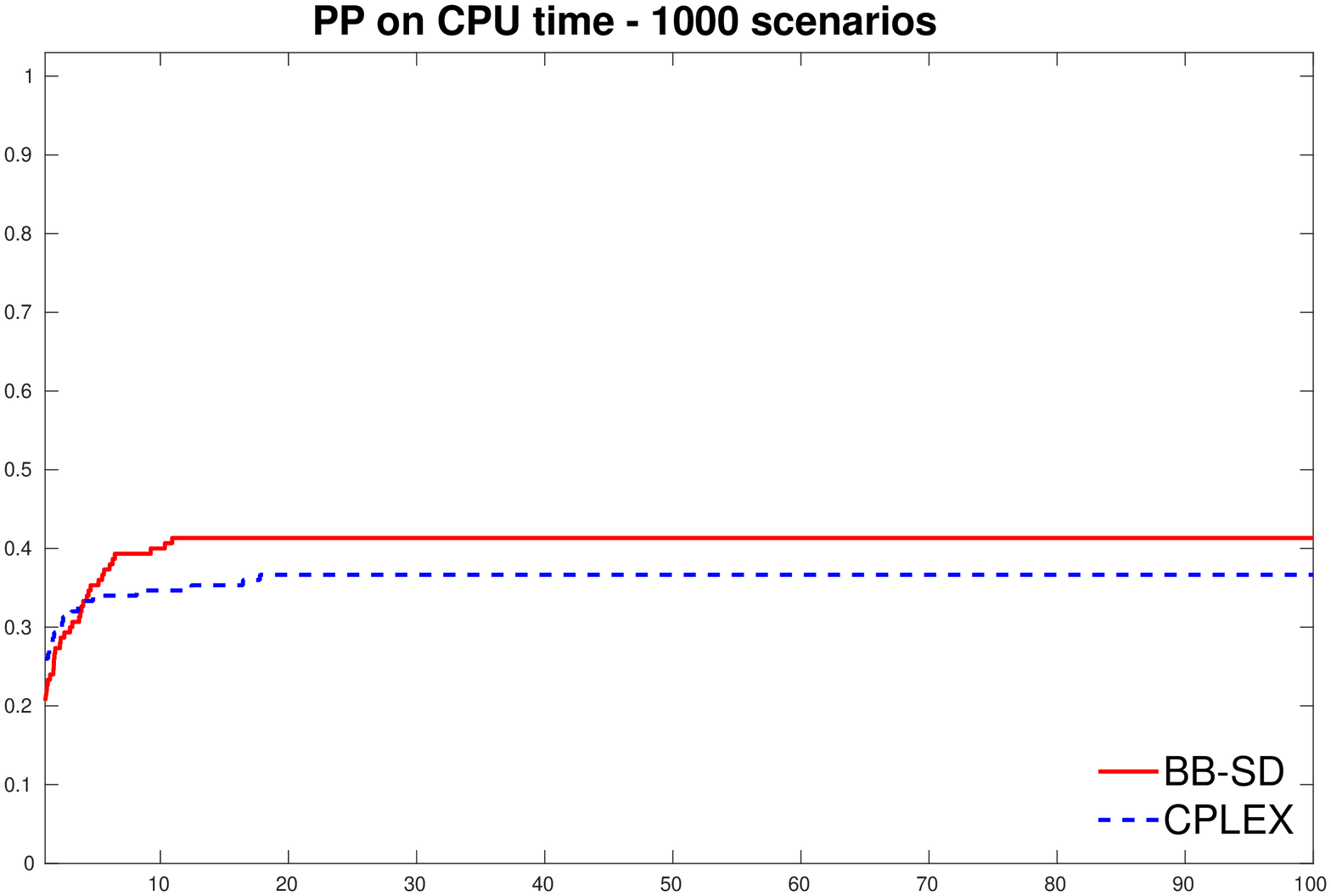}
    \caption{Comparison  between \texttt{BB-SD} and \texttt{CPLEX} on r-MST instances.}
    \label{fig:compCPLEX}
\end{figure}

\subsection{Traveling Salesman Problem}\label{sec:num-tsp}

Given an undirected, complete, and weighted graph $G =(N,E)$, the
Traveling Salesman problem consists in finding a path starting and
ending at a given vertex~$v\in N$ such that all vertices in the
graph are visited exactly once and the sum of the weights of its
constituent edges is minimized.  Our approach uses the following
formulation of the Traveling Salesman problem:
\begin{equation}\label{tsp}
  \begin{array}{rrcll}
    \min & \multicolumn{4}{l}{\max_{c\in U} c^\top x}\\[1.5ex]
    \textnormal{ s.t.} & \sum_{e\in \delta(i)}x_e & = & 2 & \forall i\in N \\[1ex]
    & \sum_{e\in \delta(X)} x_e & \ge & 2 & \forall \,\emptyset\neq X\subsetneq N\\[1ex]
    & x & \in & \{0,1\}^{|E|}
  \end{array}
\end{equation}
The number of inequalities is again exponential and for \texttt{CPLEX} we
use essentially the same separation algorithm as for the Spanning Tree problem;
see Section~\ref{sec:num-stp}.
 
For our tests, we consider 10 instances from the TSPLIB
library~\cite{tsplib}.  For each instance, we generate different
scenarios by adding to the nominal costs a random unit vector
multiplied by some coefficient.  This vector has non-negative
components, to avoid negative distances, and the coefficients are $1$,
$2$ and $3$, as for the r-MST case.  We again consider three different
numbers of scenarios, namely $10$, $100$, and $1000$, thus producing a
benchmark of $90$ instances in total.  As mentioned above, we realized
the linear oracle by using the solver \texttt{Concorde}. We used the
default version and solved each linear problem exactly. In particular,
in each linear oracle call an NP-hard problem is solved, so that the
time needed by LIN-O is now much larger than the time needed by SIM-O,
unlike in the MST case. Therefore, eliminating variables is not
effective: it would slightly reduce the time for the linear master
problems, while increasing the number of iterations and hence
increasing the time to solve the NP-hard oracles.  For this reason,
for our tests, we only consider \texttt{SD} where no dropping rule is
applied.

 In the following, we compare the performance of \texttt{SD} applied to
solve the continuous relaxation of the instances considered and the
performance of \texttt{CPLEX} at the root node.  We notice that
\texttt{CPLEX} minimizes the non-linear objective function~$\max_{c\in
  U} c^\top x$ over the subtour relaxation of the problem, while in
our formulation we implicitly optimize the same function over the
convex hull of feasible tours, thus obtaining a tighter lower
bound. However, our approach needs to solve NP-hard problems to
achieve this. It is thus not surprising that the computing time needed
by \texttt{SD} to solve the relaxation is often larger than the time
needed by \texttt{CPLEX} to solve its weaker relaxation.  However,
when requiring \texttt{CPLEX} to obtain the same stronger bound, the
required computational time increases significantly.  In
Table~\ref{tab:TSP}, we show the results for the TSP instances. For
every instance and every number of scenarios, we report the average
bound and computing time obtained by \texttt{SD} (SD root node) and by
\texttt{CPLEX} (CPLEX root node) to solve the continuous
relaxation. In the last column, we report the time needed by
\texttt{CPLEX} to obtain the same bound as the SD bound (CPLEX -- SD
bound).  The table shows that, on average, the SD bound is much
stronger than the subtour relaxation bound, but it is obtained in a
longer time.  Furthermore, the time needed by \texttt{CPLEX} to reach
the same bound as \texttt{SD} is often much larger than the SD
time and in one case the time limit of one hour was
reached.
\begin{table}\small
  \centering
  {\scalebox{.8}{
\begin{tabular}{|l|c|rr|rr|r|}
\hline
instance	&	\#sc	& \multicolumn{2}{c|}{SD root node}	&	\multicolumn{2}{c|}{CPLEX}	&	\multicolumn{1}{c|}{CPLEX}	\\
	&		& 	&	&      \multicolumn{2}{c|}{root node}	&      \multicolumn{1}{c|}{SD bound}	\\
	&		&	bound	&	time	&	bound	&	time	&	time	\\
	\hline
	\hline
brazil58	&	10	&	46031.5	&	2.09	&	41982.9	&	0.03	&	1.77	\\
	&	100	&	48086.6	&	12.35	&	43722.6	&	0.21	&	9.75	\\
	&	1000	&	49278.8	&	19.09	&	44810.6	&	2.68	&	256.33	\\
\hline
dantzig42	&	10	&	1158.0	&	1.59	&	1093.1	&	0.02	&	0.26	\\
	&	100	&	1203.6	&	0.97	&	1143.7	&	0.17	&	0.99	\\
	&	1000	&	1230.9	&	6.92	&	1166.3	&	3.50	&	15.33	\\
\hline
fri26	&	10	&	1622.8	&	0.45	&	1550.9	&	0.01	&	0.09	\\
	&	100	&	1691.7	&	1.32	&	1624.7	&	0.07	&	0.39	\\
	&	1000	&	1721.6	&	1.44	&	1663.8	&	0.64	&	2.75	\\
\hline
gr120	&	10	&	9801.5	&	15.96	&	9564.9	&	0.16	&	203.93	\\
	&	100	&	9977.8	&	68.51	&	9759.4	&	1.03	&	672.07	\\
	&	1000	&	10131.3	&	68.74	&	9886.2	&	18.34	&	$>3600.00$	\\
\hline
gr17	&	10	&	3911.2	&	0.28	&	3623.6	&	0.01	&	0.04	\\
	&	100	&	4053.9	&	0.57	&	3676.5	&	0.02	&	0.19	\\
	&	1000	&	4248.6	&	0.73	&	3925.4	&	0.16	&	1.94	\\
\hline
gr21	&	10	&	4928.4	&	0.22	&	4903.8	&	0.01	&	0.01	\\
	&	100	&	5138.5	&	0.22	&	5104.6	&	0.03	&	0.15	\\
	&	1000	&	5301.1	&	0.23	&	5277.2	&	0.24	&	0.76	\\
\hline
gr24	&	10	&	2202.9	&	0.30	&	2153.9	&	0.01	&	0.04	\\
	&	100	&	2272.6	&	0.55	&	2251.5	&	0.04	&	0.16	\\
	&	1000	&	2359.8	&	1.76	&	2318.3	&	0.37	&	1.52	\\
\hline
gr48	&	10	&	7642.8	&	0.92	&	7417.6	&	0.02	&	0.43	\\
	&	100	&	7907.4	&	6.38	&	7668.2	&	0.14	&	3.10	\\
	&	1000	&	8034.1	&	8.38	&	7789.8	&	1.57	&	31.01	\\
\hline
hk48	&	10	&	18545.0	&	0.55	&	17895.9	&	0.02	&	0.40	\\
	&	100	&	18889.0	&	1.50	&	18377.3	&	0.14	&	2.18	\\
	&	1000	&	19190.6	&	6.74	&	18854.7	&	1.63	&	21.43	\\
\hline
swiss42	&	10	&	2051.0	&	1.71	&	1970.7	&	0.03	&	0.50	\\
	&	100	&	2128.4	&	1.41	&	2064.1	&	0.16	&	1.25	\\
	&	1000	&	2185.3	&	9.20	&	2117.2	&	2.13	&	19.31	\\
	\hline
\end{tabular}
}}
\caption{Average results for r-TSP, continuous relaxations.}
\label{tab:TSP}
\end{table}

\section{Conclusion}\label{section:conclusion}

We presented an algorithm for the exact solution of strictly robust
counterparts of combinatorial optimization problems, entirely based on
a linear optimization oracle for the underlying problem. Concentrating
on the discrete scenario case, our experimental evaluation shows that
the approach is competitive both in case the underlying problem is
very easy to solve, as in the MST case, and in case it is a hard
problem, as in the TSP case. In particular, in the latter case, we
have seen that solving the underlying problem to optimality can be
beneficial even when it is NP-hard: in the same amount of time, our
approach produces much better dual bounds than CPLEX based on a
linearized IP formulation of the problem, using the standard subtour
formulation.

We emphasize again that our approach is not restricted to the case of
discrete uncertainty. However, the oracle SIM-O must be adapted when
considering other classes of uncertainty sets. In case of ellipsoidal
uncertainty, SIM-O turns out to be a second-order cone program. As
mentioned above, since~$f$ is a smooth function in this case,
cycling is not possible even if the most aggressive dropping
rule~\eqref{eq:elrule} is used. For the case of polyhedral
uncertainty, SIM-O can again be realized as a linear program, and the
statement of Theorem~\ref{theorem_perturb} holds analogously.

The investigation of other generalizations of our approach is left as future work. In particular, 
it may be interesting to extend it to more general classes of uncertain objective functions, e.g., of the 
form~$c^\top g(x)$, where a convex function~$g\colon\R^n\to\R^m$ is given and the coefficients~$c\in\R^m_+$ are uncertain. 
In this case, the function~$f$ decribing the worst case over all scenarios is still a convex function, and it suffices to 
adapt the oracle SIM-O.


\bibliographystyle{plainnat}
\bibliography{references}

\end{document}